\documentclass[twocolumn]{autartmod}
\usepackage[utf8]{inputenc}
\usepackage[T1]{fontenc}
\usepackage{mathpazo}
\usepackage{graphicx}
\usepackage{placeins}
\usepackage{amsmath,amssymb,bm}
\usepackage{xcolor}
\usepackage{silence}
\usepackage{subcaption}

\makeatletter
\renewcommand*\l@subsection{\@dottedtocline{2}{2.5em}{2.3em}} 
\makeatother

\allowdisplaybreaks 
\usepackage{ragged2e} 

\usepackage{pgfplots}
\pgfplotsset{compat=newest}
\usetikzlibrary{arrows.meta,pgfplots.fillbetween}
\newcommand{\coloredblock}[3][0.2cm]{\tikz[baseline=-0.75ex]{\node[fill=#2, draw=#3, minimum size=#1, inner sep=0pt] (n) {};}}

\colorlet{fadedred}{red!30!gray!80!white}
\colorlet{fadedblue}{blue!70!gray!80!white}
\colorlet{fadedgreen}{green!60!blue!80!gray}

\definecolor{beige}{RGB}{245,245,220}

\usetikzlibrary{shadows}
\usepackage[many]{tcolorbox}
\newtcolorbox{eqbox}{
  colback=beige!30,     
  colframe=black!30,    
  boxrule=0.7pt,        
  arc=5pt,              
  boxsep=0pt,           
  left=0pt, right=5pt,  
  top=-6pt, bottom=-10pt
}
\newtcolorbox{eqboxb}{
  colback=beige!30,     
  colframe=black!30,    
  boxrule=0.7pt,        
  arc=5pt,              
  boxsep=0pt,           
  left=0pt, right=5pt,  
  top=-6pt, bottom=5pt
}

\usepackage{hyperref}
\hypersetup{
    colorlinks=true,
    linkcolor=fadedblue, 
    citecolor=fadedgreen,
    urlcolor=fadedgreen,
    pdftitle={CT-STL}
}

\renewcommand{\dot}[1]{\overset{\text{{\large{.}}}}{#1}}

\usepackage{comment}
\usepackage{todonotes}
\usepackage{cite}
\usepackage{textcomp}

\usepackage{empheq}
\usepackage{float}

\usepackage{graphicx}

\newtheorem{definition}{Definition}

\newtheorem{proof}{Proof}

\renewenvironment{proof}{\par\textcolor{gray}{\textit{\textbf{Proof:}~}}}{\hfill$\square$\par}

\usepackage{array}

\usepackage{enumerate, mdwlist}

\usepackage{booktabs}

\newcommand{\always}{\texttt{always}}
\newcommand{\eventually}{\texttt{eventually}}
\newcommand{\implication}{\texttt{implication}}
\newcommand{\conjunction}{\texttt{conjunction}}
\newcommand{\disjunction}{\texttt{disjunction}}
\newcommand{\Negation}{\texttt{Negation}}
\newcommand{\negation}{\texttt{negation}}
\newcommand{\until}{\texttt{until}}
\newcommand{\stland}{\texttt{and}}
\newcommand{\stlor}{\texttt{or}}

\newcommand{\true}{\texttt{true}}
\newcommand{\false}{\texttt{false}}

\newcommand{\relu}[1]{|{#1}|_+}
\newcommand{\negrelu}[1]{|{#1}|_-}

\newcommand{\defeq}{\vcentcolon=}

\edef\endfrontmatter{%
  \unexpanded\expandafter{\endfrontmatter}
  \noexpand\endNoHyper 
}
\begin{document}
\begin{frontmatter}
\runtitle{CT-STL Trajectory Optimization}
\vspace{-0.3cm}
\title{%
Successive Convexification for Trajectory Optimization with Continuous-time Satisfaction of Signal Temporal Logic Specifications
\!\!\!\!\!
\thanksref{footnoteinfo}
}
\vspace{-0.25cm}
\thanks[footnoteinfo]{
Corresponding author: Samet Uzun.}
\vspace{-0.38cm}
\author{Samet Uzun}$^{*}$\ead{samet@uw.edu},~~%
\author{Beh{\c{c}}et A{\c{c}}{\i}kme{\c{s}}e}$^{*}$\ead{behcet@uw.edu}%
\address{$^{*}$William E.\ Boeing Department of Aeronautics \& Astronautics, University of Washington, Seattle, WA, 98195}
\begin{keyword}
Signal Temporal Logic; 
Robustness Measure;
Sequential Convex Programming;
Trajectory Optimization; 
Optimal Control;
\end{keyword}
\vspace{-0.2cm}
\vspace{-0.20cm}
\begin{abstract}
This paper presents a successive convexification framework for trajectory optimization under continuous-time Signal Temporal Logic (CT-STL) specifications. The framework employs generalized mean-based robustness (GMSR), a smooth and exact parameterization of discrete-time STL, as a logical building block for constructing differentiable CT-STL constraints in optimal control. It is integrated with time-dilation for free-final-time problems, finite-dimensional control parameterization, multiple-shooting discretization of the dynamics, and a convergence-guaranteed sequential convex programming method, prox-convex, to solve the nonconvex program.
The main CT-STL realization embeds temporal aggregation into augmented continuous-time dynamics. This augmentation-based construction is largely transcription-independent, can be incorporated into existing optimal-control pipelines with minimal structural changes, and enables smooth CT-STL parameterizations with accuracy controlled by a user-selected regularization parameter. We also discuss a complementary dense-time realization that evaluates CT-STL formulas directly on the integration subnodes used for dynamics discretization, yielding a smooth and exact parameterization on the numerical trajectory representation, up to the accuracy of the integration scheme.
The proposed GMSR-based formulations mitigate the locality and gradient-masking behavior of standard quantitative semantics and therefore provide a favorable landscape for gradient-based trajectory optimization. The framework is demonstrated through trajectory-optimization examples for a double-integrator system with continuous-time \always{}, \eventually{}, and \until{} specifications, and a 6-DoF quadrotor flight problem with combined \always{}, \implication{}, and \eventually{}-type specifications.
The implementation is available at \url{https://github.com/UW-ACL/TrajOpt_CT-STL}.
\end{abstract}
\end{frontmatter}
\section{Introduction}
\vspace{-0.3cm}
Temporal logic provides a systematic language for specifying complex task requirements in dynamical systems~\cite{baier2008principles}. Its expressive power makes it possible to encode sequencing, timing, persistence, implication, and conditional behaviors in a mathematically precise manner. As a result, temporal logic has become an important tool in planning and control,
with applications spanning autonomous driving~\cite{sahin2020autonomous}, robotic manipulation~\cite{kurtz2020trajectory}, multi-agent coordination~\cite{djeumou2022probabilistic}, mobile robotics~\cite{sahin2019multirobot,buyukkocak2021planning,cardona2024planning}, and UAV mission planning~\cite{pant2018fly,bacspinar2019mission}. Temporal logic constraints have also been incorporated into learning frameworks to enforce structural and behavioral properties in classification, prediction, preference learning, and policy synthesis~\cite{bombara2016decision,karagulle2024safe,ma2020stlnet,aksaray2016q,alshiekh2018safe}. These developments have created a strong demand for formulations that integrate temporal logic specifications directly into modern optimization and learning pipelines~\cite{leung2023backpropagation}.
Among temporal logics, linear temporal logic (LTL) has played a central role in formal synthesis for discrete-event systems~\cite{pnueli1977temporal}. Classical approaches rely on automata-theoretic constructions~\cite{alur1994theory}, together with receding-horizon variants and supporting software tools~\cite{wongpiromsarn2012receding,wongpiromsarn2011tulip}. Mixed-integer formulations have also been developed to encode temporal specifications directly into optimization problems~\cite{karaman2008optimal}. For continuous and hybrid systems, however, specifications must capture not only logical structure but also quantitative timing and predicates over real-valued signals. Metric temporal logic (MTL)~\cite{koymans1990specifying} and metric interval temporal logic (MITL)~\cite{alur1996benefits} enrich temporal logic with bounded timing intervals, while signal temporal logic (STL) augments these dense-time modalities with predicates over continuous-valued signals~\cite{maler2004monitoring}. This makes STL especially well suited to trajectory generation and control for physical systems.
A key step toward optimization-based synthesis under STL was the introduction of quantitative semantics. Robust semantics for MITL were proposed in~\cite{fainekos2009robustness}, and the now-standard space robustness (SR), together with time robustness, was introduced for STL in~\cite{donze2010robust}. These semantics assign a real-valued robustness score to a formula by recursively combining predicate values through $\min$ and $\max$ operators, thereby turning logical satisfaction into a numerical quantity that can be optimized~\cite{lindemann2018control,belta2019formal}.

\vspace{-0.2cm}
Most optimization-oriented work has focused on discrete-time STL (DT-STL), where the signal is evaluated only on a finite temporal grid~\cite{raman2014model,sadraddini2015robust,rodionova2021time,rodionova2022combined,pant2017smooth,haghighi2019control,gilpin2020smooth,mao2022successive,lindemann2019robust,mehdipour2019arithmetic,cardona2023mixed,mehdipour2020specifying,varnai2020robustness}. Existing DT-STL methods can be understood through a trade-off between exactness, smoothness, and numerical tractability. Mixed-integer encodings preserve Boolean correctness, but their worst-case complexity typically scales poorly with the number of binary variables~\cite{raman2014model,sadraddini2015robust,rodionova2021time,rodionova2022combined}. Direct use of SR avoids combinatorial structure, but the underlying $\min$ and $\max$ operators make the resulting objectives and constraints nonsmooth, which can severely hinder gradient-based optimization. This has motivated a large body of work on smooth approximations of DT-STL robustness. Log-sum-exp relaxations were introduced in~\cite{pant2017smooth} and extended to cumulative robustness in~\cite{haghighi2019control}; softmax-based approximations were proposed in~\cite{gilpin2020smooth}; and polynomial smoothings were used in~\cite{mao2022successive}. While these formulations improve differentiability, they are inherently approximate and typically recover exact Boolean consistency only in an asymptotic limit of the smoothing parameter.
A different line of work seeks to improve numerical behavior by reducing the dependence of robustness on isolated critical samples. This is motivated by two well-known pathologies of standard robustness, namely \emph{locality}, where the value depends on a single time instant, and \emph{masking}, where one subformula dominates a conjunction or disjunction and suppresses information from the others~\cite{mehdipour2019average}. Averaged STL~\cite{akazaki2015time}, discrete average space robustness (DASR) and simplified DASR~\cite{lindemann2019robust}, and arithmetic--geometric mean (AGM) robustness~\cite{mehdipour2019arithmetic} all attempt to distribute information more evenly across time and subformulas. These semantics improve certain numerical aspects, but they do not fully resolve the tension between smoothness and exactness. Some retain nonsmooth operators, some require auxiliary soundness constraints, and some apply only to restricted classes of formulas. Weighted generalizations of SR and AGM have also been developed to encode preferences among sub-specifications~\cite{cardona2023mixed,mehdipour2020specifying}. Another exact alternative is proposed in~\cite{varnai2020robustness}, but the resulting functions are non-monotone and not $\mathcal{C}^1$, which limits their suitability for numerical optimization.
More recently,~\cite{uzun2024optimization} introduced the generalized mean-based smooth robustness (D-GMSR), a $\mathcal{C}^1$-smooth and exact parameterization of DT-STL obtained by replacing the key $\min$ and $\max$ operators with generalized-mean compositions. Unlike prior smooth relaxations, D-GMSR is both sound and complete with respect to the underlying STL semantics. At the same time, its first-order sensitivity properties directly mitigate locality and masking, yielding a substantially better optimization landscape for gradient-based solvers. D-GMSR therefore provides a smooth, exact, and numerically attractive DT-STL parameterization.

\vspace{-0.2cm}
For continuous-time systems, however, enforcing STL only on a finite temporal grid is often inadequate. For always-type specifications, node-wise satisfaction does not preclude inter-sample violations and therefore cannot by itself guarantee safety along the full trajectory. For eventually-type specifications, restricting satisfaction to mesh points can be unnecessarily conservative and may degrade optimality, since the desired event may occur between nodes even when no sampled state satisfies the predicate. These issues motivate the study of continuous-time STL (CT-STL), where the semantics are enforced over dense time rather than only over a discretization grid.
A steadily growing literature addresses dense-time STL satisfaction for continuous-time systems. On the control-synthesis side, several works translate STL requirements into time-varying invariance and reachability conditions and enforce them using control barrier functions (CBFs) or related barrier-based controllers. Representative examples include foundational CBF constructions~\cite{lindemann2018control}, continuous-time model predictive control (MPC) formulations~\cite{Charitidou2021BFMPCSTL}, actuation-constrained CBF frameworks~\cite{Buyukkocak2022ActuationCBF}, and extensions to certain nested formulas~\cite{Yu2024NestedSTL}. On the trajectory-optimization side, dense-time guarantees have so far been obtained mainly through a few specialized mechanisms. One approach augments mixed-integer planners with CBF-derived intersample conditions under zero-order hold, thereby restoring dense-time correctness between sampling instants~\cite{Yang2020CTSTLCBF}. Another guarantees continuous-time satisfaction for quadrotor missions by planning over sparse waypoint sequences, enforcing strictified sampled-time specifications, and combining these with intersample deviation bounds and a hierarchical tracking architecture~\cite{pant2018fly}. More recent methods optimize directly over continuous trajectory families, including sound mixed-integer formulations over continuous B\'ezier trajectories for temporally robust multi-agent planning~\cite{Verhagen2024TemporallyRobust}, B\'ezier control-point and acceleration conditions for CT-STL satisfaction with time-varying robustness~\cite{Yuan2024TimeVaryingRobustness}, and their extension to multiple quadrotors through piecewise B\'ezier reference generation with explicit tracking-error bounds~\cite{Yuan2025Quadrotors}. A recent sampling-based planner also encodes a fragment of STL into a forward-invariant time-varying set and searches for dynamically feasible trajectories within that set~\cite{Marchesini2025SamplingBased}. On the semantics side, average-based robustness has been extended to continuous-time signals~\cite{mehdipour2019average}, but these formulations remain nonsmooth and do not by themselves yield a tractable reformulation for standard gradient-based nonlinear programming.

\vspace{-0.2cm}
In parallel, continuous-time successive convexification frameworks were developed to ensure continuous-time satisfaction of path constraints, i.e., \always{}-type specifications, in trajectory optimization~\cite{ctcs2024}. These methods augment the system dynamics with penalized path-constraint states, so that continuous-time enforcement of the original path constraints is achieved by enforcing satisfaction of the corresponding augmented dynamical constraints. Their numerical effectiveness has been demonstrated in several applications, including GPU-accelerated trajectory optimization for 6-DoF rocket landing~\cite{chari2024fast}, nonlinear model predictive control for obstacle avoidance~\cite{nmpc2024}, and trajectory optimization for 6-DoF aircraft approach and landing~\cite{aircraft2024}. This framework was later combined with GMSR in~\cite{uzun2025sequential} to handle the continuous-time satisfaction of logical \always{}-type specifications in a 6-DoF rocket landing problem, and its practical utility was further illustrated in perception-constrained drone motion planning~\cite{uzun2025motion}, in a related rocket-landing problem solved using an auto-tuned SCP method~\cite{mceowen2026auto}. In parallel, these ideas were incorporated into SCvxGEN, an automatic code-generation framework for fast embedded trajectory optimization~\cite{scvxgen}.
More recently, a dense-time GMSR-based CT-STL parameterization was proposed in~\cite{uzun2026smooth}. That approach evaluates temporal operators directly on the integration subnodes used for numerical dynamics propagation, yielding a smooth and exact parameterization of CT-STL specifications on the numerical trajectory representation, up to the accuracy of the integration scheme.
From an implementation perspective, however, dense-time semantic evaluation introduces an additional layer of derivative bookkeeping. The integration-subnode states and their sensitivities with respect to the shooting variables must be retained or reconstructed in order to assemble STL derivatives through the chain rule. This can increase memory requirements and implementation complexity, especially for long horizons, high-dimensional systems, or fine integration grids.
Taken together, these developments show that continuous-time satisfaction can be handled effectively through both augmented path-constraint dynamics and exact dense-time semantic evaluation. However, the existing literature still leaves open a complementary trajectory-optimization framework in which richer CT-STL temporal aggregation, including operators such as \eventually{} and \until{}, is embedded directly into augmented continuous-time dynamics and integrated with free-final-time optimal control, finite-dimensional control parameterization, multiple-shooting discretization, and convergence-guaranteed sequential convex programming.

\vspace{-0.2cm}
Motivated by this gap, this paper develops a smooth augmentation-based parameterization of continuous-time Signal Temporal Logic (CT-STL) specifications for nonlinear trajectory optimization. Building on the exact and smooth D-GMSR formulation~\cite{uzun2024optimization}, the proposed framework lifts the discrete-time aggregation structure of GMSR to continuous time by replacing finite sums and product-type aggregates with integral and geometric-integral counterparts over the relevant time intervals~\cite{stanley1999multiplicative, bashirov2008multiplicative}. These continuous-time aggregates are then embedded into the optimal-control problem through auxiliary dynamical states appended to the original system.
Once these auxiliary states are included in the dynamics, their sensitivities are handled by the same multiple-shooting and automatic-differentiation machinery used for the original states. This absorbs the temporal-logic bookkeeping into the optimal-control transcription and avoids the need to explicitly retain or reconstruct integration-subnode states and their sensitivities solely for assembling STL derivatives through the chain rule.
The resulting framework preserves the continuous-time nature of temporal specifications within the trajectory-optimization problem while retaining the favorable smoothness and gradient properties of GMSR. In particular, it provides differentiable representations of \always{}, \eventually{}, and \until{} specifications over prescribed time intervals, mitigates the locality and masking issues of standard quantitative semantics, and remains largely transcription-independent. As a result, the proposed augmentation-based realization can be incorporated into existing optimal-control pipelines with minimal structural changes.
The numerical performance of the proposed framework is demonstrated through trajectory-optimization examples involving a double-integrator system under separate \always{}, \eventually{}, and \until{} specifications, as well as a 6-DoF quadrotor flight problem with combined \always{}, \implication{}, and \eventually{}-type specifications.

\vspace{-0.2cm}
\section{Preliminaries}\label{sec:stl}
\vspace{-0.2cm}
This section reviews the standard syntax and semantics of Signal Temporal Logic (STL) for continuous-time signals and fixes the notation used throughout the paper. Let $x:\mathbb{R}_{+}\to\mathbb{R}^{n}$ be a continuous-time signal. A Boolean-valued atomic predicate $\mu$ is induced by a continuously differentiable predicate function $g:\mathbb{R}^{n}\to\mathbb{R}$ according to
$
    \mu := (g(x(t)) \ge 0).
$
Thus, the predicate $\mu$ is \true{} at time $t$ if and only if $g(x(t))\ge 0$.

\vspace{-0.2cm}
An STL formula $\varphi$ is defined recursively as
\begin{equation*}\label{eq:stl_syntax}
    \varphi ::= \top \mid \mu \mid \neg \varphi \mid \varphi_1 \wedge \varphi_2 \mid \varphi_1 \bm{U}_{[a,b]} \varphi_2,
\end{equation*}
where $\varphi_1$ and $\varphi_2$ are STL subformulas, $\neg$ and $\wedge$ denote the Boolean operators of \negation{} and \stland{} (\conjunction{}), respectively, and $\bm{U}_{[a,b]}$ denotes the bounded \until{} operator over the interval $[a,b]\subset\mathbb{R}_{+}$.

\vspace{-0.2cm}
The standard derived operators \stlor{} (\disjunction{}), \implication{}, \eventually{}, and \always{} are recovered from the fundamental operators via
$
\varphi_1 \vee \varphi_2 \equiv \neg(\neg \varphi_1 \wedge \neg \varphi_2),
\,
\varphi_1 \implies \varphi_2 \equiv \neg \varphi_1 \vee \varphi_2,
$
and
$
\bm{F}_{[a,b]}\varphi \equiv \top \bm{U}_{[a,b]} \varphi,
\,
\bm{G}_{[a,b]}\varphi \equiv \neg \bm{F}_{[a,b]} \neg \varphi.
$

The Boolean semantics determine whether a signal satisfies a formula at a given time instant. We write $(x,t)\models\varphi$ to indicate that the signal $x$ satisfies $\varphi$ at time $t$.

\vspace{-0.2cm}
\begin{definition}[Continuous-time STL semantics \cite{donze2010robust}]
For a continuous-time signal $x:\mathbb{R}_{+}\to\mathbb{R}^{n}$ and time $t\in\mathbb{R}_{+}$, the Boolean satisfaction relation $(x,t)\models\varphi$ is defined recursively as summarized in Table~\ref{tab:stl_semantics}.
\end{definition}

\vspace{-0.2cm}
\begin{table}[!t]
\centering
\renewcommand{\arraystretch}{1.25}
\begin{tabular}{ll}
\toprule
\textbf{Formula} $\varphi$ & \textbf{Boolean semantics} $(x,t)\models\varphi$ \\
\midrule
$\mu$ & $g(x(t)) \ge 0$ \\
$\neg \varphi$ & $\neg\big((x,t)\models \varphi\big)$ \\
$\varphi_1 \wedge \varphi_2$ & $(x,t)\models \varphi_1 \;\wedge\; (x,t)\models \varphi_2$ \\
$\varphi_1 \vee \varphi_2$ & $(x,t)\models \varphi_1 \;\vee\; (x,t)\models \varphi_2$ \\
$\varphi_1 \implies \varphi_2$ & $\neg\big((x,t)\models \varphi_1\big)\;\vee\; (x,t)\models \varphi_2$ \\
\midrule
$\bm{G}_{[a,b]}\varphi$ & $\forall\, t' \in [t+a,t+b],\; (x,t')\models \varphi$ \\
$\bm{F}_{[a,b]}\varphi$ & $\exists\, t' \in [t+a,t+b]\ \text{s.t.}\ (x,t')\models \varphi$ \\
$\varphi_1 \bm{U}_{[a,b]} \varphi_2$
& $\exists\, t_1 \in [t+a,t+b]\ \text{s.t.}\ (x,t_1)\models \varphi_2$ \\
& \quad and $\forall\, t_2 \in [t,t_1],\; (x,t_2)\models \varphi_1$ \\
\bottomrule
\end{tabular}
\caption{Continuous-time Boolean semantics for STL.}
\label{tab:stl_semantics}
\end{table}

\vspace{-0.1cm}
While Boolean semantics provide a binary \true{}/\false{} evaluation, optimal control requires a continuous-valued measure of satisfaction that can be used within numerical solvers. A standard choice is the quantitative robustness semantics introduced in Table~\ref{tab:stl_robustness}~\cite{donze2010robust}. Let $\rho^\varphi(x,t)$ denote the robustness of formula $\varphi$ at time $t$. These semantics are sign-consistent with Boolean satisfaction: $\rho^\varphi(x,t)>0$ implies strict satisfaction, $\rho^\varphi(x,t)<0$ implies violation, and $\rho^\varphi(x,t)=0$ corresponds to the satisfaction boundary.

\vspace{-0.2cm}
\begin{table}[!t]
\centering
\renewcommand{\arraystretch}{1.25}
\begin{tabular}{ll}
\toprule
\textbf{Formula} $\varphi$ & \textbf{Quantitative robustness} $\rho^\varphi(x,t)$ \\
\midrule
$\mu$ & $g(x(t))$ \\
$\neg \varphi$ & $-\rho^\varphi(x,t)$ \\
$\varphi_1 \wedge \varphi_2$ & $\min\!\big(\rho^{\varphi_1}(x,t),\,\rho^{\varphi_2}(x,t)\big)$ \\
$\varphi_1 \vee \varphi_2$ & $\max\!\big(\rho^{\varphi_1}(x,t),\,\rho^{\varphi_2}(x,t)\big)$ \\
$\varphi_1 \implies \varphi_2$ & $\max\!\big(-\rho^{\varphi_1}(x,t),\,\rho^{\varphi_2}(x,t)\big)$ \\
\midrule
$\bm{G}_{[a,b]}\varphi$ & $\min\limits_{t' \in [t+a,t+b]} \rho^\varphi(x,t')$ \\
$\bm{F}_{[a,b]}\varphi$ & $\max\limits_{t' \in [t+a,t+b]} \rho^\varphi(x,t')$ \\
$\varphi_1 \bm{U}_{[a,b]} \varphi_2$
& $\max\limits_{t_1 \in [t+a,t+b]}
\min\!\Big(\rho^{\varphi_2}(x,t_1),\,
\min\limits_{t_2 \in [t,t_1]} \rho^{\varphi_1}(x,t_2)\Big)$ \\
\bottomrule
\end{tabular}
\caption{Standard quantitative robustness semantics for continuous-time STL.}
\label{tab:stl_robustness}
\end{table}

\vspace{-0.1cm}
Although the robustness semantics in Table~\ref{tab:stl_robustness} preserve the truth conditions of STL, they rely on nested $\min/\max$ operators and are therefore generally nonsmooth. This nonsmooth structure can make them difficult to embed directly in gradient-based optimal control methods, especially when temporal operators must be enforced over dense time. In the developments that follow, these standard semantics serve as the reference notion of correctness, while the proposed formulation introduces a smooth and optimization-friendly representation that remains faithful to continuous-time STL satisfaction.
\section{Successive Convexification for Trajectory Optimization with CT-STL Specifications}
\vspace{-0.2cm}
We consider a free-final-time optimal control problem in Mayer form, subject to continuous-time Signal Temporal Logic (CT-STL) specifications. The problem is formulated as follows:
\begin{subequations} \label{ct-ocp}
    \begin{align}
        \underset{x, \, u, \, t_{\mathrm{f}}}{\mathrm{minimize}} \quad & L\big(t_{\mathrm{f}}, x(t_{\mathrm{f}})\big) \label{ct-cost} \\
        \mathrm{subject~to} \quad & \dot{x}(t) = F\big(t, x(t), u(t)\big), \; \mathrm{a.e.} \; t \in [0, t_{\mathrm{f}}], \\
        & P\big( x(0), u(0), x(t_{\mathrm{f}}), u(t_{\mathrm{f}}) \big) \leq 0, \\
        & Q\big( x(0), u(0), x(t_{\mathrm{f}}), u(t_{\mathrm{f}}) \big) = 0, \\
        & (x, 0) \models \varphi, \label{eq:ct-stl-constraint}
    \end{align}
\end{subequations}
where $L$ represents the terminal cost, $F$ governs the nonlinear system dynamics, $P$ and $Q$ denote the inequality and equality boundary conditions respectively, and $\varphi$ is the global STL specification evaluated at the initial time $t=0$.

\vspace{-0.2cm}
\begin{rem} \label{rem:ctcs-always}
Standard continuous-time path constraints $g(x(t)) \leq 0$ and equality constraints $h(x(t)) = 0$ enforced over the entire trajectory $t \in [0, t_{\mathrm f}]$ do not require separate mathematical treatment in this framework. They can be expressed equivalently through the STL satisfaction condition
\begin{align*}
    (x,0)\models
    \bm{G}_{[0,t_{\mathrm f}]}
    \!
    \big(
        (-g(x)\!\ge\!0)
        \! \wedge \!
        (h(x)\!\ge\!0)
        \! \wedge \!
        (-h(x)\!\ge\!0)
    \big).
\end{align*}
This allows all state-dependent constraints to be unified and evaluated under a single, smooth robustness parameterization.
\end{rem}

\vspace{-0.2cm}
The remainder of this section develops the complete successive-convexification pipeline for solving~\eqref{ct-ocp}. We first construct a GMSR-based parameterization of CT-STL specifications by lifting the finite-dimensional aggregation structure of D-GMSR to continuous time and realizing the resulting temporal aggregates through auxiliary dynamical states. We then incorporate time-dilation for free-final-time problems, introduce a finite-dimensional control parameterization, discretize the augmented dynamics using multiple shooting, and obtain a finite-dimensional nonlinear optimal control problem. Finally, we apply the prox-convex sequential convex programming method to solve the resulting nonconvex program.
The main body focuses on this augmentation-based realization because it embeds temporal aggregation directly into the optimal-control dynamics and allows the associated sensitivities to be handled by the same multiple-shooting and automatic-differentiation machinery used for the original system. For completeness, Appendix~\ref{ssec:exact_dense_ctstl} summarizes the exact dense-time realization of~\cite{uzun2026smooth}, which evaluates GMSR temporal operators on integration subnodes, and briefly discusses its extension to time-dilated trajectories.

\vspace{-0.2cm}
\subsection{Parameterization of the CT-STL specifications}
\vspace{-0.2cm}
To incorporate CT-STL specifications into trajectory optimization, we use the generalized mean-based smooth robustness (GMSR) construction introduced in \cite{uzun2024optimization}. Standard quantitative semantics typically face a fundamental trade-off. They are either exact but nonsmooth, due to repeated use of $\min$ and $\max$, or smooth but approximate. Moreover, common smooth approximations often suffer from \emph{locality}, where sensitivity is concentrated at a single critical time, and \emph{masking}, where satisfied subformulas suppress gradient information from violated ones. GMSR avoids these issues by providing a smooth and exact parameterization of the logical operators, with gradient information distributed across multiple relevant subformulas and time instants rather than collapsing onto a single critical sample.

\vspace{-0.2cm}
\subsubsection{Logical operators}
\vspace{-0.2cm}
For the logical operators \stland{} ($\wedge$) and \stlor{} ($\vee$), we use the simplified GMSR parameterization
\vspace{-0.2cm}
\begin{align*}
    {}^{\wedge} h^{c}(y)
    &\defeq
    \left(M_{0}^{c}\!\left(\relu{y}^{2}\right)\right)^{\frac12}
    -
    \left(M_{1}^{c}\!\left(\negrelu{y}^{2}\right)\right)^{\frac12}, \\
    {}^{\vee} h^{c}(y)
    &\defeq
    -{}^{\wedge} h^{c}(-y),
\end{align*}
where $y\in\mathbb R^n$, $\relu{y}:=\max(0,y)$, and $\negrelu{y}:=\min(0,y)$ are applied elementwise, and
\vspace{-0.2cm}
\begin{align*}
    M_{0}^{c}(z)
    \defeq
    \left(c^n+\prod_{i=1}^{n} z_i\right)^{1/n}, 
    \quad
    M_{1}^{c}(z)
    \defeq
    c+\frac{1}{n}\sum_{i=1}^{n} z_i,
\end{align*}
with $c\in\mathbb R_{++}$.

\vspace{-0.2cm}
\begin{rem}[Smoothing parameter and extensions]
The parameter $c$ acts as a positive regularization shift near the switching region. For any $c>0$, the resulting functions ${}^{\wedge} h^{c}$ and ${}^{\vee} h^{c}$ are $\mathcal C^1$-smooth with bounded gradients, while preserving the exact sign-based parameterization of conjunction and disjunction. Smaller values of $c$ produce a less flattened transition near zero, whereas larger values yield a smoother but flatter local landscape.

\vspace{-0.2cm}
The simplified form above corresponds to a uniform-weight, fixed-curvature choice within the more general GMSR family introduced in \cite{uzun2024optimization}. In that formulation, weight parameters can be used to emphasize selected inputs or time samples, while curvature parameters can make the aggregation more or less extremum-like. For clarity of presentation, we use the simplified form throughout this section. Further intuition, derivative properties, and plots for different choices of these parameters are given in \cite{uzun2024optimization}.
\end{rem}

\vspace{-0.2cm}
We now define the robustness measure recursively. For an atomic predicate
$
\mu := (g(x)\ge 0),
$
let
\vspace{-0.2cm}
\begin{align*}
    \Gamma^{\mu}(x,t) := g(x(t)).
\end{align*}
\Negation{}, \conjunction{}, \disjunction{}, and \implication{} are then parameterized as
\vspace{-0.2cm}
\begin{align*}
    \Gamma_{\bm c}^{\neg\varphi}(x,t)
    &:= -\Gamma_{\bm c}^{\varphi}(x,t), \\
    \Gamma_{\bm c}^{\varphi_1\wedge\varphi_2}(x,t)
    &:= {}^{\wedge} h^{c_1}
    \Big(
        \Gamma_{\bm c}^{\varphi_1}(x,t),\,
        \Gamma_{\bm c}^{\varphi_2}(x,t)
    \Big), \\
    \Gamma_{\bm c}^{\varphi_1\vee\varphi_2}(x,t)
    &:= {}^{\vee} h^{c_1}
    \Big(
        \Gamma_{\bm c}^{\varphi_1}(x,t),\,
        \Gamma_{\bm c}^{\varphi_2}(x,t)
    \Big), \\
    \Gamma_{\bm c}^{\varphi_1\Rightarrow\varphi_2}(x,t)
    &:= {}^{\vee} h^{c_1}
    \Big(
        -\Gamma_{\bm c}^{\varphi_1}(x,t),\,
        \Gamma_{\bm c}^{\varphi_2}(x,t)
    \Big),
\end{align*}
where $\bm c$ collects the positive shift parameters appearing in the recursive GMSR compositions.

\vspace{-0.2cm}
Accordingly,
\vspace{-0.2cm}
\begin{align*}
    (x,t)\models \varphi
    \iff
    \Gamma_{\bm c}^{\varphi}(x,t)\ge 0
\end{align*}
for any formula $\varphi$ built recursively from the above logical operators.

\vspace{-0.2cm}
\begin{rem}
Although the conditions
\vspace{-0.2cm}
\begin{align*}
    {}^{\wedge} h^{c}(y)\ge 0
    &\iff
    \sum_{i=1}^{n}\negrelu{y_i}^{2}=0, \\
    {}^{\vee} h^{c}(y)\ge 0
    &\iff
    \prod_{i=1}^{n}\negrelu{y_i}^{2}=0
\end{align*}
are logically equivalent, the full GMSR expressions are numerically preferable. The geometric-mean normalization, together with the outer square-root, keeps the robustness values and gradients better scaled, while the equivalent sum and product tests can be poorly conditioned. Moreover, the shift parameter $c>0$ regularizes the switching region, ensures $\mathcal {C} ^1$-smoothness, and avoids singular behavior near zero.
\end{rem}

\vspace{-0.2cm}
\subsubsection{Temporal operators}
\vspace{-0.2cm}
A continuous-time realization of CT-STL temporal operators can be obtained through dynamical augmentation. This realization is transcription-independent and can often be incorporated into an existing optimal-control formulation simply by augmenting the state dynamics. It is based on replacing the discrete multiplication and summation appearing in the GMSR operators by their continuous-time analogues, namely a normalized geometric integral \cite{stanley1999multiplicative, bashirov2008multiplicative} and a normalized ordinary integral.

\vspace{-0.2cm}
For a scalar signal $y(t)$ on an interval $I=[a,b]$, define
\vspace{-0.2cm}
\begin{align*}
    \mathcal M_{0,\varepsilon}^{c}(y;I)
    &\defeq
    c+\exp\!\left(
        \frac{1}{|I|}
        \int_a^b \log\!\big(\relu{y(t)}^2+\varepsilon\big)\,\mathrm dt
    \right), \\
    \mathcal M_{1}^{c}(y;I)
    &\defeq
    c+\frac{1}{|I|}
    \int_a^b \negrelu{y(t)}^2\,\mathrm dt.
\end{align*}
The corresponding temporal surrogates are
\vspace{-0.2cm}
\begin{align*}
    {}^{\wedge}\mathcal H_{\varepsilon}^{c}(y;I)
    &\defeq
    \big(\mathcal M_{0,\varepsilon}^{c}(y;I)\big)^{\frac12}
    -
    \big(\mathcal M_{1}^{c}(y;I)\big)^{\frac12}, \\
    {}^{\vee}\mathcal H_{\varepsilon}^{c}(y;I)
    &\defeq
    -{}^{\wedge}\mathcal H_{\varepsilon}^{c}(-y;I).
\end{align*}
The shift $\varepsilon>0$ is required because $\relu{y(t)}^2$ may vanish, in which case the logarithm is undefined. Consequently, the augmentation-based realization is only approximately exact for any fixed $\varepsilon>0$; mitigation of the resulting approximation effects is discussed after the operator constructions below.

\vspace{-0.2cm}
These temporal aggregates can be implemented by augmenting the dynamics. In this paper, we use a multiplicative realization for the geometric term by introducing the auxiliary state
\vspace{-0.2cm}
\begin{align*}
    \dot \eta(t)
    =
    \frac{1}{|I|}\,\eta(t)\log\!\big(\relu{y(t)}^2+\varepsilon\big),
    \qquad
    \eta(a)=1,
\end{align*}
\vspace{-0.2cm}
so that
\vspace{-0.2cm}
\begin{align*}
    \eta(b)=\exp\!\left(
        \frac{1}{|I|}
        \int_a^b \log\!\big(\relu{y(t)}^2+\varepsilon\big)\,\mathrm dt
    \right),
\end{align*}
\vspace{-0.2cm}
and therefore
\vspace{-0.2cm}
\begin{align*}
    \mathcal M_{0,\varepsilon}^{c}(y;I)=c+\eta(b).
\end{align*}
Because the dynamics are normalized by $|I|$, the state evolves on the scale of the geometric mean rather than the geometric product. In particular, if $\relu{y(t)}^2=0$ on $I$, then $\eta(b)=\varepsilon$.

\vspace{-0.2cm}
The additive realization of the integral term is the same as that used in \cite{ctcs2024} for continuous-time satisfaction of path constraints
\vspace{-0.2cm}
\begin{align*}
    \dot \xi(t)
    =
    \frac{1}{|I|}\negrelu{y(t)}^2,
    \qquad
    \xi(a)=0,
\end{align*}
\vspace{-0.2cm}
so that
\vspace{-0.2cm}
\begin{align*}
    \mathcal M_{1}^{c}(y;I)=c+\xi(b).
\end{align*}

\vspace{-0.2cm}
A logarithmic realization of the geometric term is also possible, obtained by introducing
\vspace{-0.2cm}
\begin{align*}
    \dot \zeta(t)
    =
    \frac{1}{|I|}\log\!\big(\relu{y(t)}^2+\varepsilon\big),
    \qquad
    \zeta(a)=0,
\end{align*}
in which case $\eta(b)=e^{\zeta(b)}$. Both realizations are equivalent in exact arithmetic. In what follows, however, we use the multiplicative realization.

\vspace{-0.2cm}
For an interval $I=[a,b]$, let $\mathbf 1_I(t)$ denote the indicator function of $I$, namely $\mathbf 1_I(t)=1$ if $t\in I$ and $\mathbf 1_I(t)=0$ otherwise.

\vspace{-0.2cm}
We now introduce operator-specific augmented dynamics. For each temporal formula $\psi$, we define an augmented state $x^\psi(t)$, and vector field $F^\psi$.

\vspace{-0.2cm}
For \always{}, $\psi:=\bm G_I\varphi$ with $I=[a,b]$, define the augmented state
\vspace{-0.2cm}
\begin{align*}
    x^{\psi}(t)
    :=
    \big(
        x(t),
        \eta_G(t),
        \xi_G(t)
    \big),
\end{align*}
with augmented dynamics
\vspace{-0.2cm}
\begin{align*}
    \dot{\eta}_G(t)
    &=
    \frac{1}{|I|}
    \mathbf 1_I(t)\,
    \eta_G(t)\,
    \log \bigl(|\Gamma_{\bm c}^{\varphi}(x,t)|_+^2+\varepsilon\bigr),
    \\
    \dot{\xi}_G(t)
    &=
    \frac{1}{|I|}
    \mathbf 1_I(t)\,
    |\Gamma_{\bm c}^{\varphi}(x,t)|_-^2,
\end{align*}
and initial auxiliary conditions
$
    \eta_G(0)=1,
    \,
    \xi_G(0)=0.
$
The corresponding parameterized robustness is
\vspace{-0.2cm}
\begin{align*}
    \Gamma_{\bm c,\varepsilon}^{\bm G_I\varphi}(x,0)
    &:=
    \big(c+\eta_G(t_{\mathrm f})\big)^{\frac12}
    -
    \big(c+\xi_G(t_{\mathrm f})\big)^{\frac12}.
\end{align*}

\vspace{-0.2cm}
For \eventually{}, $\psi:=\bm F_I\varphi$ with $I=[a,b]$, define the augmented state
\vspace{-0.2cm}
\begin{align*}
    x^{\psi}(t)
    :=
   \big(
        x(t),
        \eta_F(t),
        \xi_F(t)
    \big),
\end{align*}
with augmented dynamics
\vspace{-0.2cm}
\begin{align*}
    \dot{\eta}_F(t)
    &=
    \frac{1}{|I|}
    \mathbf 1_I(t)\,
    \eta_F(t)\,
    \log \bigl(|\Gamma_{\bm c}^{\varphi}(x,t)|_-^2+\varepsilon\bigr),
    \\
    \dot{\xi}_F(t)
    &=
    \frac{1}{|I|}
    \mathbf 1_I(t)\,
    |\Gamma_{\bm c}^{\varphi}(x,t)|_+^2,
\end{align*}
and initial auxiliary conditions
$
    \eta_F(0)=1,
    \,
    \xi_F(0)=0.
$
The corresponding parameterized robustness is
\vspace{-0.2cm}
\begin{align*}
    \Gamma_{\bm c,\varepsilon}^{\bm F_I\varphi}(x,0)
    &:=
    \big(c+\xi_F(t_{\mathrm f})\big)^{\frac12}
    -
    \big(c+\eta_F(t_{\mathrm f})\big)^{\frac12}.
\end{align*}

\vspace{-0.2cm}
For \until{}, $\psi:=\varphi_1 \bm U_I \varphi_2$ with $I=[a,b]$, let
\vspace{-0.2cm}
\[
\Gamma_{\bm c}^{\varphi_1}(x,t)
\quad \text{and} \quad
\Gamma_{\bm c}^{\varphi_2}(x,t)
\]
denote the robustness signals of the two subformulas. We first construct a running prefix surrogate for the statement that $\varphi_1$ has held on $[0,t]$. Introduce
\vspace{-0.2cm}
\begin{align*}
    \dot{\eta}_1(t)
    &=
    \eta_1(t)\log\!\big(\relu{\Gamma_{\bm c}^{\varphi_1}(x,t)}^2+\varepsilon\big),
    \qquad
    \eta_1(0)=1, \\
    \dot{\xi}_1(t)
    &=
    \negrelu{\Gamma_{\bm c}^{\varphi_1}(x,t)}^2,
    \qquad
    \xi_1(0)=0.
\end{align*}
Define the prefix-normalized quantities
\vspace{-0.2cm}
\begin{align*}
    \hat\eta_1(t)
    &:=
    \begin{cases}
        \eta_1(t)^{1/t}, & t>0,\\
        \relu{\Gamma_{\bm c}^{\varphi_1}(x,0)}^2+\varepsilon, & t=0,
    \end{cases} \\
    \hat\xi_1(t)
    &:=
    \begin{cases}
        \xi_1(t)/t, & t>0,\\
        \dot{\xi}_1(0), & t=0,
    \end{cases}
\end{align*}
and then set
\vspace{-0.2cm}
\begin{align*}
    q_1(t)
    :=
    \Big(c_3+\hat\eta_1(t)\Big)^{\frac12}
    -
    \Big(c_3+\hat\xi_1(t)\Big)^{\frac12}.
\end{align*}
This gives a continuous surrogate of the statement that $\varphi_1$ has held on the prefix interval $[0,t]$.

\vspace{-0.2cm}
\begin{rem}
The prefix normalization by $t$ in $\hat\eta_1(t)$ and $\hat\xi_1(t)$ may be regularized in practice by replacing $t$ with $t+\delta$, where $\delta>0$ is small. This avoids the singularity at $t=0$ and can improve numerical behavior. The resulting construction is no longer exactly identical to the ideal prefix normalization, but the perturbation can be made arbitrarily small as $\delta\to 0^+$.
\end{rem}

\vspace{-0.2cm}
Next, combine this running prefix signal with $\varphi_2$ at each time using the logical GMSR conjunction
\vspace{-0.2cm}
\begin{align*}
    z(t)
    :=
    {}^{\wedge}h^{c_2} \big(\Gamma_{\bm c}^{\varphi_2}(x,t),q_1(t)\big).
\end{align*}
Collecting the original state, the prefix states, and the outer witness states, define the augmented state
\vspace{-0.2cm}
\begin{align*}
    x^{\psi}(t)
    :=
    \big(
        x(t),
        \eta_1(t),
        \xi_1(t),
        \eta_U(t),
        \xi_U(t)
    \big),
\end{align*}
with augmented dynamics for outer witness states
\vspace{-0.2cm}
\begin{align*}
    \dot \eta_U(t) &= \dfrac{1}{|I|}\,\mathbf 1_I(t)\,
    \eta_U(t)\log\!\big(\negrelu{z(t)}^2+\varepsilon\big)\\
    \dot \xi_U(t) &= \dfrac{1}{|I|}\,\mathbf 1_I(t)\,
        \relu{z(t)}^2
\end{align*}
and initial auxiliary conditions
$
    \eta_U(0)=1
$,
$
    \xi_U(0)=0.
$
The corresponding parameterized robustness is
\vspace{-0.2cm}
\begin{align*}
    \Gamma_{\bm c,\varepsilon}^{\varphi_1 \bm U_I \varphi_2}(x,0)
    &:=
    \big(c_1+\xi_U(t_{\mathrm f})\big)^{\frac12}
    -
    \big(c_1+\eta_U(t_{\mathrm f})\big)^{\frac12}.
\end{align*}

\vspace{-0.25cm}
It is convenient to absorb both the auxiliary initial conditions and the terminal robustness inequality into augmented boundary maps. Accordingly, we define
\vspace{-0.2cm}
\begin{align*}
    P^\psi\big(x^\psi(0),u(0),x^\psi(t_{\mathrm f}),u(t_{\mathrm f})\big)
    &:=
    \begin{bmatrix}
        P\big(z_0,z_{\mathrm f}\big)\\[0.1cm]
        -\Gamma_{\bm c,\varepsilon}^{\psi}(x,0)
    \end{bmatrix},\\
    Q^\psi\big(x^\psi(0),u(0),x^\psi(t_{\mathrm f}),u(t_{\mathrm f})\big)
    &:=
    \begin{bmatrix}
        Q\big(z_0,z_{\mathrm f}\big)\\[0.1cm]
        x^\psi_{\mathrm{aux}}(0)-x^\psi_{\mathrm{aux},0}
    \end{bmatrix},
\end{align*}
where $z_0 := \big(x(0),u(0)\big)$, $z_{\mathrm f} := \big(x(t_{\mathrm f}),u(t_{\mathrm f})\big)$. Here, $x^\psi_{\mathrm{aux}}(t)$ denotes the auxiliary block of $x^\psi(t)$, and $x^\psi_{\mathrm{aux},0}$ collects the corresponding prescribed initial values. With this notation, the CT-STL-augmented continuous-time optimal control problem takes the form
\vspace{-0.2cm}
\begin{subequations}\label{eq:ct_ocp_aug}
\begin{align*}
    \underset{x^\psi,\,u,\,t_{\mathrm f}}{\mathrm{minimize}}
    \quad
    &L\big(t_{\mathrm f},x(t_{\mathrm f})\big) \\
    \mathrm{subject\ to}\quad
    &\dot x^\psi(t)=F^\psi\big(t,x^\psi(t),u(t)\big),
    \quad \mathrm{a.e.}\ t\in[0,t_{\mathrm f}], \\
    &P^\psi\big(x^\psi(0),u(0),x^\psi(t_{\mathrm f}),u(t_{\mathrm f})\big)\le 0, \\
    &Q^\psi\big(x^\psi(0),u(0),x^\psi(t_{\mathrm f}),u(t_{\mathrm f})\big)=0.
\end{align*}
\end{subequations}

\vspace{-0.2cm}
\begin{rem}
Although the augmentation-based realization is not strictly exact, the practical effect of the $\varepsilon$-shift can often be made arbitrarily small by combining asymmetric weighting with small physical buffers. For critical \always{} constraints such as obstacle avoidance, a small buffer in the predicate restores strict engineering safety, while increasing the relative weight of the integral term reduces the influence of the shifted geometric term. As this weighting becomes more skewed toward the integral term, the required buffer can be made arbitrarily small. In the limiting case, one may remove the geometric-integral term entirely and retain only the integral penalty on the negative part, at the cost of losing the positive-margin shaping effect of GMSR. For \eventually{} specifications, if the target set has a nonempty interior, the integral term over the positive margin can dominate the shifted geometric term, making the induced conservatism arbitrarily small. The same ideas carry over to \until{} through its inner \always{}-type prefix condition and outer \eventually{}-type witness condition.
\end{rem}

\vspace{-0.2cm}
In summary, the augmentation-based realization embeds CT-STL temporal aggregation directly into auxiliary continuous-time dynamics, facilitating its integration with multiple shooting and automatic differentiation without explicitly retaining dense-time samples and their sensitivities solely for STL derivative assembly. The tradeoff for this transcription-friendly structure is the additional $\varepsilon$-shift in the geometric-integral term, whose practical effect can be made small through the mitigation strategies discussed above. This augmentation-based realization is the main CT-STL construction used in the finite-dimensional trajectory-optimization framework below. An exact dense-time alternative that avoids this $\varepsilon$-shift by evaluating GMSR temporal operators on integration subnodes is summarized in Appendix~\ref{ssec:exact_dense_ctstl}.

\vspace{-0.2cm}
\subsubsection{Weighted extensions}
\vspace{-0.2cm}
The simplified formulas above correspond to the basic choice of uniform weighting and fixed curvature in the original GMSR family. More generally, the logical operators can be written in the weighted form
\vspace{-0.2cm}
\begin{align*}
    {}^{\wedge} h_{p,w}^{c}(y)
    &\defeq
    \left( M_{0,w}^{c}\!\left(\relu{y}^{2}\right) \right)^{\frac12}
    -
    \left( M_{p,w}^{c}\!\left(\negrelu{y}^{2}\right) \right)^{\frac12}, \\
    {}^{\vee} h_{p,w}^{c}(y)
    &\defeq
    -{}^{\wedge} h_{p,w}^{c}(-y),
\end{align*}
where
\vspace{-0.2cm}
\begin{align*}
    M_{0,w}^{c}(z)
    &\defeq
    \left(c^{\mathbf 1^T w}+\prod_{i=1}^n z_i^{w_i}\right)^{1/(\mathbf 1^T w)}, \\
    M_{p,w}^{c}(z)
    &\defeq
    \left(c^p+\frac{1}{\mathbf 1^T w}\sum_{i=1}^n w_i z_i^p\right)^{1/p}.
\end{align*}
Here, $c>0$ is the smoothness shift, $p\in\mathbb Z_{++}$ controls the curvature of the power-mean term, and $w\in\mathbb Z_{++}^n$ encodes the relative importance of the inputs. Larger values of $p$ make the power-mean term closer to an extremum-type aggregation, while $p=1$ yields the arithmetic-mean choice used throughout this paper. Larger weights amplify the influence of the corresponding arguments on both the robustness value and its gradient.

\vspace{-0.2cm}
In the augmentation-based realization, the discrete weight vector is replaced by a nonnegative weighting density $\omega:I\to\mathbb R_+$ over the interval $I$, leading to weighted temporal aggregates of the form
\vspace{-0.2cm}
\begin{align*}
    \mathcal{M}^{c}_{0,\omega,\varepsilon}(y;I)
    &\defeq
    c + \exp\!\left(
        \frac{1}{\Omega_I}
        \int_I \omega(t)\log\!\big(\relu{y(t)}^2+\varepsilon\big)\,\mathrm dt
    \right), \\
    \mathcal{M}^{c}_{p,\omega}(y;I)
    &\defeq
    \left(
        c^p+
        \frac{1}{\Omega_I}
        \int_I \omega(t)\negrelu{y(t)}^{2p}\,\mathrm dt
    \right)^{1/p},
\end{align*}
where
\vspace{-0.2cm}
\[
\Omega_I := \int_I \omega(t)\,\mathrm dt.
\]
Accordingly, the weighted continuous-time temporal surrogate becomes
\vspace{-0.2cm}
\begin{align*}
    {}^{\wedge} \mathcal H^{c}_{p,\omega,\varepsilon}(y;I)
    &:=
    \left(\mathcal{M}^{c}_{0,\omega,\varepsilon}(y;I)\right)^{\frac12}
    -
    \left(\mathcal{M}^{c}_{p,\omega}(y;I)\right)^{\frac12}, \\
    {}^{\vee} \mathcal H^{c}_{p,\omega,\varepsilon}(y;I)
    &:=
    -{}^{\wedge} \mathcal H^{c}_{p,\omega,\varepsilon}(-y;I).
\end{align*}
Thus, the original GMSR parameters naturally extend to the continuous-time setting. The parameter $c$ controls regularity near switching, $p$ controls the curvature of the averaging term, and $w$ or $\omega$ controls the relative importance of subformulas or time instants.

\vspace{-0.2cm}
\subsubsection{Gradient behavior and optimization relevance}
\vspace{-0.2cm}
The favorable gradient properties of GMSR established in \cite{uzun2024optimization} carry over to the continuous-time constructions used here. The key point is that GMSR distributes derivative information across multiple relevant subformulas and time instants, rather than collapsing it onto a single critical sample as in standard $\min$/$\max$-based robustness. This is precisely why locality and masking are mitigated.

\vspace{-0.2cm}
At the level of the logical operators, the derivative of ${}^{\wedge}h_{p,w}^{c}$ or ${}^{\vee}h_{p,w}^{c}$ with respect to each active argument is shaped by the same weighting and curvature rules as in \cite{uzun2024optimization}. In the geometric-mean branch, the sensitivity is inversely proportional to the magnitude of the active argument, whereas in the power-mean branch it is proportional to $|y_i|^{2p-1}$. Consequently, the geometric branch emphasizes the most critical active margins, while the power-mean branch emphasizes the largest active deviations.

\vspace{-0.2cm}
The augmentation-based realization has the same qualitative structure, but in continuous form and with the $\varepsilon$-shift in the geometric term. If $y(t)$ is the active signal and $\omega(t)$ is the temporal weight density, then the conjunction-type surrogate ${}^{\wedge}\mathcal H_{p,\omega,\varepsilon}^{c}$ retains the same smooth worst-case interpretation:
\vspace{-0.2cm}
\begin{align*}
    \frac{\delta\,{}^{\wedge}\mathcal H_{p,\omega,\varepsilon}^{c}}{\delta y(t)}
    &\propto
    \frac{\omega(t) y(t)}{y(t)^2+\varepsilon}
    \quad \text{geometric-integral branch}, \\
    \frac{\delta\,{}^{\wedge}\mathcal H_{p,\omega,\varepsilon}^{c}}{\delta y(t)}
    &\propto
    \omega(t)|y(t)|^{2p-1}
    \quad \text{integral branch}.
\end{align*}
Thus, smaller positive margins and larger violations receive stronger emphasis across the interval. For the disjunction-type surrogate ${}^{\vee}\mathcal H_{p,\omega,\varepsilon}^{c}$, the interpretation is again reversed. The geometric-integral branch acts in a smooth best-case manner, emphasizing the samples closest to satisfaction, while the integral branch emphasizes the larger positive margins. 

\vspace{-0.2cm}
In summary, the augmentation-based continuous-time realization preserves the main optimization advantage of GMSR. Gradient information is distributed over multiple relevant subformulas and time instants, with stronger emphasis placed on the most critical active margins rather than a single extremizing sample.

\vspace{-0.2cm}
\subsection{Time-dilation}\label{ssec:dilation}
\vspace{-0.2cm}
We employ time-dilation \cite{kamath2023seco} to transform the free-final-time CT-STL-augmented optimal control problem \eqref{eq:ct_ocp_aug} into an equivalent fixed-final-time formulation. Specifically, we define a strictly increasing, continuously differentiable mapping
\vspace{-0.2cm}
\begin{align*}
    \tilde t:[0,1]\to\mathbb R_+,
\end{align*}
which maps the fixed dilated-time domain $\tau\in[0,1]$ to the physical time $t$, with
$
    \tilde t(0)=0,
    \,
    \tilde t(1)=t_{\mathrm f}.
$
The rate of time flow is
\vspace{-0.2cm}
\begin{align*}
    s(\tau):=\frac{\mathrm d\tilde t(\tau)}{\mathrm d\tau},
\end{align*}
and is treated as an additional decision variable.

\vspace{-0.2cm}
To express the CT-STL-augmented dynamics in the dilated-time domain, we append the physical time as an additional state and the dilation factor as an additional input. For a given temporal formula $\psi$, define
\vspace{-0.2cm}
\begin{align*}
    \tilde x(\tau)
    &:=
    \begin{bmatrix}
        x^\psi(\tilde t(\tau))\\
        \tilde t(\tau)
    \end{bmatrix},
    \qquad
    \tilde u(\tau)
    :=
    \begin{bmatrix}
        u(\tilde t(\tau))\\
        s(\tau)
    \end{bmatrix}.
\end{align*}
Then, if the physical-time augmented dynamics are
\vspace{-0.2cm}
\begin{align*}
    \dot x^\psi(t)=F^\psi\big(t,x^\psi(t),u(t)\big),
\end{align*}
the corresponding dynamics in the $\tau$-domain are
\vspace{-0.2cm}
\begin{align*}
    \frac{\mathrm d\tilde x(\tau)}{\mathrm d\tau}
     =& f\big(\tilde x(\tau),\tilde u(\tau)\big)\\
    :=&
    \begin{bmatrix}
        s(\tau)\,F^\psi\big(\tilde t(\tau),x^\psi(\tilde t(\tau)),u(\tilde t(\tau))\big)\\
        s(\tau)
    \end{bmatrix}.
\end{align*}
Thus, the final dilated time is fixed at
$
    \tau_{\mathrm f}=1,
$
while the physical final time remains free through
\vspace{-0.2cm}
\begin{align*}
    t_{\mathrm f}
    =
    \tilde t(1)
    =
    \int_0^1 s(\tau)\,\mathrm d\tau.
\end{align*}
The augmented boundary conditions are mapped equivalently into the dilated-time domain as
\vspace{-0.2cm}
\begin{align*}
    \tilde P^\psi\big(\tilde x(0),\tilde u(0),\tilde x(1),\tilde u(1)\big)
    &:=
    P^\psi\big(z_0^\psi,z^\psi_{\mathrm f}\big), \\
    \tilde Q^\psi\big(\tilde x(0),\tilde u(0),\tilde x(1),\tilde u(1)\big)
    &:=
    Q^\psi\big(z_0^\psi,z^\psi_{\mathrm f}\big),
\end{align*}
where $z_0^\psi := \big( x^\psi(0),u(0) \big)$ and $z^\psi_{\mathrm f} := \big( x^\psi(t_{\mathrm f}),u(t_{\mathrm f}) \big)$. Here, $x^\psi(0)$, $u(0)$, $x^\psi(t_{\mathrm f})$, and $u(t_{\mathrm f})$ are understood as the corresponding components of $\tilde x(0)$, $\tilde u(0)$, $\tilde x(1)$, and $\tilde u(1)$, respectively. Hence, the free-final-time CT-STL-augmented problem is equivalently written as a fixed-final-time problem on the interval $[0,1]$.

\vspace{-0.2cm}
\subsection{Control Input Parameterization}\label{ssec:control_param}
\vspace{-0.2cm}
To obtain a finite-dimensional optimization problem, we parameterize the continuous-time input trajectory using a finite set of nodal values. We discretize the dilated-time interval $ [0,1] $ into $K$ uniformly spaced nodes
\vspace{-0.2cm}
\begin{align*}
    0=\tau_1<\cdots<\tau_K=1,
\end{align*}
with
$
    \Delta\tau:=\tau_{k+1}-\tau_k,
    \,
    k\in\mathcal K:=\{1,\dots,K-1\}.
$

\vspace{-0.2cm}
The control input is parameterized by first-order hold (FOH):
\vspace{-0.2cm}
\begin{align*}
    u(\tau)
    =
    u_k \frac{\tau_{k+1}-\tau}{\Delta\tau}
    +
    u_{k+1} \frac{\tau-\tau_k}{\Delta\tau},
    \qquad \tau\in[\tau_k,\tau_{k+1}],
\end{align*}
for each $k\in\mathcal K$.

\vspace{-0.2cm}
In contrast, the dilation factor is parameterized by zero-order hold (ZOH):
\vspace{-0.2cm}
\begin{align*}
    s(\tau)=s_k, \qquad \tau\in[\tau_k,\tau_{k+1}),
\end{align*}
so that
\vspace{-0.2cm}
\begin{align*}
    \Delta t_k = \int_{\tau_k}^{\tau_{k+1}} s(\tau)\,\mathrm d\tau = s_k \Delta\tau.
\end{align*}
To ensure strictly forward time evolution, we impose
\vspace{-0.2cm}
\begin{align*}
    s_k \ge s_{\min}, \qquad \forall k\in\mathcal K,
\end{align*}
for some prescribed $s_{\min}>0$.

\vspace{-0.2cm}
A brief discussion of why the dilation factor $s(\tau)$ is parameterized by ZOH rather than FOH is deferred to Appendix~\ref{app:foh_dilation}.

\vspace{-0.2cm}
\subsection{Discretization and Multiple Shooting}\label{ssec:discretization}
\vspace{-0.2cm}
With the time-dilated augmented state and input values $\tilde{x}_k$ and $\tilde{u}_k$ at the nodes $\tau_k$ acting as decision variables, we discretize the dynamics using a multiple-shooting method~\cite{bock1984multiple}. The state transition from node $k$ to node $k+1$ is enforced through
\vspace{-0.2cm}
\begin{align*}
    \tilde{x}_{k+1}=f_{\tau_k}^{\tau_{k+1}}(\tilde{x}_k,\tilde{u}_k,\tilde{u}_{k+1}),
\end{align*}
where the flow map is defined by
\vspace{-0.2cm}
\begin{align*}
    f_{\tau_k}^{\tau_{k+1}}(\tilde{x}_k,\tilde{u}_k,\tilde{u}_{k+1})
    :=
    \tilde{x}_k
    +
    \int_{\tau_k}^{\tau_{k+1}}
    f\big(\tilde{x}(\tau),\tilde{u}(\tau)\big)\,\mathrm d\tau.
\end{align*}
Here, $\tilde{x}(\tau)$ denotes the time-dilated augmented state trajectory evolving from the initial condition $\tilde{x}(\tau_k)=\tilde{x}_k$ under the parameterized input $\tilde{u}(\tau)$. The flow map is evaluated numerically using a chosen integration scheme, yielding a finite-dimensional nonlinear program with multiple-shooting defect constraints. 

\vspace{-0.2cm}
In the prox-convex iterations, the flow map and defect constraints are linearized with respect to the local shooting variables. These Jacobians are obtained by integrating the variational equations of the time-dilated dynamics on each interval; see Appendix~\ref{app:flow_map_linearization}.

\vspace{-0.2cm}
\subsection{Finite-dimensional nonlinear optimal control problem}\label{ssec:fd_nlp}
\vspace{-0.2cm}
After time-dilation, input parameterization, dynamics discretization, and CT-STL realization, the original continuous-time problem is transcribed into a finite-dimensional nonlinear optimal control problem. Let
\vspace{-0.2cm}
\begin{align*}
    Z
    :=
    \big(
    \tilde{x}_1,\dots,\tilde{x}_K,\,
    \tilde{u}_1,\dots,\tilde{u}_K
    \big)
\end{align*}
collect the nodal decision variables, where $\tilde{x}_k$ denotes the time-dilated state at node $\tau_k$ and $\tilde{u}_k$ denotes the corresponding nodal input.

\vspace{-0.2cm}
The original cost induces a finite-dimensional cost function, which we denote by
\vspace{-0.2cm}
\begin{align*}
    L_d(Z):=L(t_{\mathrm f},x_K).
\end{align*}
Define the multiple-shooting defect on interval $k\in\{1,\dots,K-1\}$ by
\vspace{-0.2cm}
\begin{align*}
    D_k(Z)
    &:=
    \tilde{x}_{k+1}
    -
    f_{\tau_k}^{\tau_{k+1}}(\tilde{x}_k,\tilde{u}_k,\tilde{u}_{k+1}),
\end{align*}
and let
$
    D(Z)
    :=
    \big(
    D_1(Z),\dots,D_{K-1}(Z)
    \big)
$
denote the stacked dynamics-defect map. Let
\begin{align*}
    P_d(Z)\le 0,
    \qquad
    Q_d(Z)=0
\end{align*}
denote the finite-dimensional inequality and equality maps induced by the augmented boundary maps $P^\psi$ and $Q^\psi$ after time-dilation, input parameterization, and discretization. 

For the CT-STL specification, we write
\vspace{-0.2cm}
\begin{align*}
    \Gamma_{\bm c}^{\phi}[Z]
\end{align*}
for the scalar robustness functional associated with the full specification $\phi$, evaluated on the discretized trajectory induced by $Z$. In the augmentation-based realization used in the main development, $\Gamma_{\bm c}^{\phi}[Z]$ is obtained from the terminal values of the auxiliary states induced by $Z$. The exact dense-time realization summarized in Appendix~\ref{ssec:exact_dense_ctstl} provides an alternative instantiation of the same robustness functional by evaluating GMSR temporal operators on the dense integration-grid states.

\vspace{-0.2cm}
With these definitions, the finite-dimensional nonlinear optimal control problem is written as
\vspace{-0.2cm}
\begin{subequations}\label{eq:fd_nlp}
\begin{align}
    \underset{Z\in\mathcal Z}{\mathrm{minimize}}
    \quad
    &L_d(Z)-w_{\phi}\Gamma_{\bm c}^{\phi}[Z]
    \label{eq:fd_nlp_cost} \\
    \mathrm{subject\ to}\quad
    &D(Z)=0, \label{eq:fd_nlp_dyn} \\
    &P_d(Z)\le 0, \label{eq:fd_nlp_ineq} \\
    &Q_d(Z)=0, \label{eq:fd_nlp_eq}
\end{align}
\end{subequations}
where $w_{\phi}\ge 0$ is the weight assigned to the CT-STL robustness term, and $\mathcal Z$ denotes any explicitly retained convex constraints on the discretized variables, such as simple bounds on states, inputs, and time-dilation variables.

\vspace{-0.2cm}
Problem \eqref{eq:fd_nlp} is generally nonconvex because of the discretized dynamics, the finite-dimensional boundary mappings $P_d,Q_d$, the CT-STL robustness functional $\Gamma_{\bm c}^{\phi}[Z]$, and possibly the discretized terminal cost $L_d(Z)$ itself.

\vspace{-0.2cm}
\subsection{Prox-convex algorithm for solution}\label{ssec:proxconvex_solution}
\vspace{-0.2cm}
To solve the finite-dimensional nonlinear optimal control problem \eqref{eq:fd_nlp}, we employ a convergence-guaranteed SCP method, prox-convex algorithm~\cite{proxconvex}. The key idea is to separate the discretized problem into a convex part that is kept exact, a nonconvex objective part, represented in smooth-composite form, and a collection of nonconvex constraints handled through an exact penalty.

\vspace{-0.2cm}
We write the finite-dimensional objective as
\vspace{-0.2cm}
\begin{align*}
    L_d(Z)-w_\phi \Gamma_{\bm c}^{\phi}[Z]
    =
    G(Z)+S(R(Z)),
\end{align*}
where $G$ denotes the convex part of the objective, $S:\mathbb{R}^m\to\mathbb{R}$ is a smooth outer map, and
$
    R(Z)=\big(R_1(Z),\dots,R_m(Z)\big)
$
collects convex inner channels. This representation is natural, for example, when convex predicate functions are composed through smooth GMSR operators. In the simplest case, when no additional structure is exploited, one may simply absorb the entire nonconvex objective into the term $S(R(Z))$.

\vspace{-0.2cm}
Next, we collect the remaining nonconvex constraints into
\vspace{-0.2cm}
\begin{align*}
    C(Z)
    :=
    \begin{bmatrix}
        D(Z)\\
        P_d(Z)\\
        Q_d(Z)
    \end{bmatrix},
\end{align*}
where $D(Z)=0$ denotes the multiple-shooting defect constraints, $P_d(Z)\le 0$ the discretized inequality constraints, and $Q_d(Z)=0$ the discretized equality constraints. These constraints are penalized through the convex exact-penalty functional
\vspace{-0.2cm}
\begin{align*}
    H(C(Z))
    :={}&
    w_{\mathrm{dyn}}\|D(Z)\|_1
    \\
    &+
    w_P \bm{1}^{\top} [P_d(Z)]_+
    \\
    &+
    w_Q\|Q_d(Z)\|_1 .
\end{align*}
where $w_{\mathrm{dyn}},w_P,w_Q>0$ are penalty weights. The resulting penalized nonlinear objective is therefore
\vspace{-0.2cm}
\begin{align*}
    J_{\mathrm{nl}}(Z)
    :=
    G(Z)+S(R(Z))+H(C(Z)).
\end{align*}
At iteration $j+1$, given the current iterate $Z^j$, the nonconvex objective term $S(R(Z))$ is replaced by its prox-convex local model. Define
\vspace{-0.2cm}
\begin{align*}
    \mathcal I_j^-
    :=
    \big\{
        i\in\{1,\dots,m\}
        :
        \nabla_i S(R(Z^j))<0
    \big\}.
\end{align*}
Then the local convex model of $S(R(Z))$ at $Z^j$ is
\vspace{-0.2cm}
\begin{align*}
    \mathcal M_S^j (Z)
    &:=
    S(R(Z^j))
    +
    \sum_{i=1}^{m}
    \nabla_i S(R(Z^j))\,\Phi_i(Z;Z^j),
\end{align*}
where
\vspace{-0.2cm}
\begin{align*}
    \Phi_i(Z;Z^j)
    :=
    \begin{cases}
        R_i(Z)-R_i(Z^j),
        & \text{if } i\notin\mathcal I_j^-, \\[0.1cm]
        \nabla R_i(Z^j)^\top (Z-Z^j),
        & \text{if } i\in\mathcal I_j^-.
    \end{cases}
\end{align*}
Hence, channels whose outer derivative is nonnegative are kept exact, while channels with negative outer derivative are affine linearized in order to preserve convexity of the local model. This is precisely the prox-convex mechanism that allows a smooth outer map to be combined with convex inner channels without destroying convexity of the subproblem~\cite{proxconvex}.

\vspace{-0.2cm}
For the penalty term, we use the standard convex-composite linearization
\vspace{-0.2cm}
\begin{align*}
    \mathcal M_H^j (Z)
    &:=
    H\!\left(
        C(Z^j)+\nabla C(Z^j)(Z-Z^j)
    \right).
\end{align*}
The convex subproblem solved at iteration $j+1$ is then
\vspace{-0.2cm}
\begin{align*}
    \underset{Z\in\mathcal Z}{\mathrm{minimize}}
    \;\,
    G(Z)
    \!+\!
    \mathcal M_S^j (Z)
    \!+\!
    \mathcal M_H^j (Z)
    \!+\!
    \frac{1}{2}\|Z-Z^j\|_{Q_j}^2,
\end{align*}
where $Q_j\succ0$ is the proximal metric. Since $G$ and $\mathcal Z$ are convex, $\mathcal M_S(\cdot;Z^j)$ is convex by construction, and $H$ is convex, the subproblem is convex.

\vspace{-0.2cm}
The proximal metric $Q_j$ may be chosen simply as a scaled identity, or, if desired, enriched with positive-semidefinite curvature information. In particular, one may take
\vspace{-0.2cm}
\begin{align*}
    Q_j=\mu_j I + B_j^+,
\end{align*}
where $\mu_j>0$ is a scalar proximal weight and $B_j^+\succeq0$ is a PSD curvature approximation. Additional second-order information can therefore be embedded into the convex subproblem if desired, but we do not pursue those details here and instead refer the reader to~\cite{proxconvex}.

\vspace{-0.2cm}
The scalar proximal weight is updated adaptively using the ratio between the actual and predicted decreases in the penalized objective, following the adaptive variable-metric prox-convex framework of~\cite{proxconvex}. Thus, when the local model is accurate, the method reduces the proximal regularization and takes bolder steps; when the agreement deteriorates, the proximal weight is increased to restore reliability. Under the standing assumptions of the prox-convex method, this yields a convergence-guaranteed SCP scheme for the penalized problem.

\vspace{-0.2cm}
\section{Numerical Results}\label{sec:numerical}
\vspace{-0.2cm}
We demonstrate the proposed framework on four trajectory-generation problems. The first three are fixed-final-time double-integrator examples constructed to illustrate the \always{}, \eventually{}, and \until{} operators in isolation. The fourth is a free-final-time $6$-DoF quadrotor problem with moving waypoints, moving implication-type obstacle constraints, and terminal-set requirements. In all cases, the primary objective is specification satisfaction. To keep the presentation focused, the exact code-level scaling and smoothing constants used in the implementation are omitted here.

\vspace{-0.2cm}
In our implementation, \texttt{JAX} is used to integrate the augmented dynamics and to compute the first-order derivative information of the smooth residual maps required for linearization~\cite{bradbury2018jax}. Each convex subproblem is modeled in \texttt{CVXPY}~\cite{diamond2016cvxpy} and solved using \texttt{QOCO}~\cite{chari2025qoco}. The implementation is available at \url{https://github.com/UW-ACL/TrajOpt_CT-STL}

\vspace{-0.2cm}
\begin{rem}[Satisfaction-oriented surrogates]
In some of the examples below, the objective is only to generate trajectories that satisfy the target specification. Accordingly, the auxiliary-state constructions are chosen to capture the relevant {satisfaction mechanism} of each operator, rather than to reproduce the full CT-GMSR robustness exactly. We therefore denote the terminal objective quantities by $\widehat{\Gamma}$ rather than by $\Gamma$. These surrogates are sign- and satisfaction-oriented, but need not coincide with the full CT-GMSR robustness value.
\end{rem}

\vspace{-0.2cm}
\subsection{Double-integrator examples}\label{sec:num_di}
\vspace{-0.2cm}
\subsubsection{Base dynamics and common continuous-time path constraints}
\vspace{-0.2cm}
For the three double-integrator examples, let
$
    x := (r,v)\in\mathbb R^6,
    \,
    r=(r_x,r_y,r_z),
    \,
    v=(v_x,v_y,v_z),
    \,
    u=(u_x,u_y,u_z)\in\mathbb R^3,
$
with dynamics
$
    \dot r(t)=v(t),
    \,
    \dot v(t)=u(t)-g_0e_3,
    \,
    g_0=9.806~\mathrm{m/s^2}.
$

\vspace{-0.2cm}
All three examples are subject to the same continuous-time tilt, thrust, and speed constraints,
$
    u_x^2+u_y^2 \le (\cos\theta_{\max}\,u_z)^2,
    \,
    \|u\|_2 \le T_{\max},
    \,
    \|v\|_2 \le v_{\max},
$
with $\theta_{\max}=45^\circ$, $T_{\max}=1.75\,g_0$, and $v_{\max}=6~\mathrm{m/s}$. Defining the predicates
\vspace{-0.2cm}
\begin{align*}
    \mu_{\theta}(u)
    &:=
    \Big(
        (\cos\theta_{\max}\,u_z)^2-u_x^2-u_y^2 \ge 0
    \Big),\\
    \mu_T(u)
    &:=
    \Big(
        T_{\max}^2-u_x^2-u_y^2-u_z^2 \ge 0
    \Big),\\
    \mu_v(v)
    &:=
    \Big(
        v_{\max}^2-v_x^2-v_y^2-v_z^2 \ge 0
    \Big),
\end{align*}
the common path-constraint formula is
\vspace{-0.2cm}
\begin{align*}
    \varphi_{\mathrm{path}}
    :=
    \bm G_{[0,t_{\mathrm f}]}
    \big(
        \mu_{\theta}\wedge\mu_T\wedge\mu_v
    \big).
\end{align*}
In the implementation, these continuous-time \always{} constraints are enforced through an accumulated path-penalty state $\eta_{\mathrm p}$,
\vspace{-0.2cm}
\begin{align*}
    \dot \eta_{\mathrm p}(t)
    =
    \chi_{\theta}(u(t))
    +
    \chi_T(u(t))
    +
    \chi_v(v(t)),
    \qquad
    \eta_{\mathrm p}(0)=0,
\end{align*}
where
\vspace{-0.2cm}
\begin{align*}
    \chi_{\theta}(u)
    &:=
    \big[
        (\cos\theta_{\max}\,u_z)^2-u_x^2-u_y^2
    \big]_-^2,\\
    \chi_T(u)
    &:=
    \big[
        T_{\max}^2-u_x^2-u_y^2-u_z^2
    \big]_-^2,\\
    \chi_v(v)
    &:=
    \big[
        v_{\max}^2-v_x^2-v_y^2-v_z^2
    \big]_-^2,
\end{align*}
and $[s]_+ := \max(s,0)$, $[s]_- := \min(s,0)$. Hence, $\eta_{\mathrm p}(t_{\mathrm f})=0$ certifies satisfaction of the common continuous-time path constraints.

\vspace{-0.2cm}
To avoid repeating the boundary conditions, define
\vspace{-0.2cm}
\begin{align*}
    \mathcal X_{\mathrm i}(r_{\mathrm i})
    &:=
    \left\{
        (r,v)\in\mathbb R^6 : r=r_{\mathrm i},\; v=0
    \right\},\\
    \mathcal X_{\mathrm f}(r_{\mathrm f})
    &:=
    \left\{
        (r,v)\in\mathbb R^6 : r=r_{\mathrm f},\; v=0
    \right\},
\end{align*}
and let
\vspace{-0.2cm}
\begin{align*}
    u_{\mathrm h}:=(0,0,g_0)
\end{align*}
denote the gravity-compensating input. Then the common endpoint conditions are
\vspace{-0.2cm}
\begin{align*}
    x(0)\in\mathcal X_{\mathrm i}(r_{\mathrm i}),
    \quad
    x(t_{\mathrm f})\in\mathcal X_{\mathrm f}(r_{\mathrm f}),
    \quad
    u(0)=u(t_{\mathrm f})=u_{\mathrm h}.
\end{align*}

\vspace{-0.2cm}
\subsubsection{Example 1: \always{} obstacle avoidance}
\vspace{-0.2cm}
The first example uses
$
    r_{\mathrm i}=(-5,0,0),
    \,
    r_{\mathrm f}=(5,0,0),
    \,
    t_{\mathrm f}=7~\mathrm{s},
    \,
    K=6,
$
and requires the vehicle to always avoid two implication-type obstacle regions. The obstacle specifications are
\vspace{-0.2cm}
\begin{align*}
    \varphi_{\mathrm{obs},1}(r)
    &:=
    \left(
        a_1 \le r_x \le b_1
        \;\Rightarrow\;
        r_y \le c_1
    \right),\\
    \varphi_{\mathrm{obs},2}(r)
    &:=
    \left(
        a_2 \le r_x \le b_2
        \;\Rightarrow\;
        r_y \ge c_2
    \right),
\end{align*}
with
$
    (a_1,b_1,c_1)=(-3.25,-0.25,-2),
    \,
    (a_2,b_2,c_2)=(0.25,3.25,2).
$
Thus, the overall task is
\vspace{-0.2cm}
\begin{align*}
    \varphi_{\mathrm{alw}}
    :=
    \varphi_{\mathrm{path}}
    \wedge
    \bm G_{[0,t_{\mathrm f}]}
    \big(
        \varphi_{\mathrm{obs},1}\wedge\varphi_{\mathrm{obs},2}
    \big).
\end{align*}
\vspace{-0.2cm}
We augment the state as
\vspace{-0.1cm}
\begin{align*}
    x^{\varphi_{\mathrm{alw}}}:=(x,\eta_{\mathrm p},\xi_1,\xi_2),
\end{align*}
where $\eta_{\mathrm p}$ handles the common path constraints and $\xi_1,\xi_2$ accumulate obstacle violations:
\vspace{-0.2cm}
\begin{align*}
    \dot \xi_1(t) &= [a_1-r_x]_-^2\,[r_x-b_1]_-^2\,[c_1-r_y]_-^2,
    \quad
    \xi_1(0)=0,\\
    \dot \xi_2(t) &= [a_2-r_x]_-^2\,[r_x-b_2]_-^2\,[r_y-c_2]_-^2,
    \quad
    \xi_2(0)=0.
\end{align*}
We therefore define the terminal surrogate
\vspace{-0.2cm}
\begin{align*}
    \widehat{\Gamma}_{\mathrm{alw}}
    &:=
    -
    \Big(
        w_{\eta}\eta_{\mathrm p}(t_{\mathrm f})
        +w_1\xi_1(t_{\mathrm f})
        +w_2\xi_2(t_{\mathrm f})
    \Big),
\end{align*}
with positive weights $w_{\eta},w_1,w_2$. Maximizing $\widehat{\Gamma}_{\mathrm{alw}}$, or equivalently minimizing its negative, favors trajectories that satisfy both the common path constraints and the obstacle-avoidance requirement.

\vspace{-0.2cm}
The resulting continuous-time problem is
\vspace{-0.2cm}
\begin{empheq}[box=\fbox]{equation*}
\begin{aligned}
    \underset{x^{\varphi_{\mathrm{alw}}}(\cdot),u(\cdot)}{\mathrm{maximize}}
    \quad &
    \widehat{\Gamma}_{\mathrm{alw}} \\
    \mathrm{subject\ to}\quad
    &
    \dot r(t)=v(t),\qquad
    \dot v(t)=u(t)-g_0e_3,\\
    &\dot \eta_{\mathrm p}(t)=\chi_{\theta}(u(t))+\chi_T(u(t))+\chi_v(v(t)),\\
    &\dot \xi_1(t) = [a_1-r_x]_-^2\,[r_x-b_1]_-^2\,[c_1-r_y]_-^2,\\
    &\dot \xi_2(t) = [a_2-r_x]_-^2\,[r_x-b_2]_-^2\,[r_y-c_2]_-^2,\\
    &
    x(0)\in\mathcal X_{\mathrm i}(r_{\mathrm i}),\qquad
    x(t_{\mathrm f})\in\mathcal X_{\mathrm f}(r_{\mathrm f}),\\
    &
    u(0)=u(t_{\mathrm f})=u_{\mathrm h}, \qquad \eta_{\mathrm p}(0) = 0,\\
    & \xi_1(0)=0, \qquad \xi_2(0)=0.
\end{aligned}
\end{empheq}

\vspace{-0.2cm}
Figure~\ref{fig:di_always} shows the optimized trajectory for the \always{} obstacle-avoidance example. The vehicle departs from the straight start-to-goal path, bends around the two implication-type forbidden regions, and passes through the admissible corridor before reaching the terminal state. 
The color-coded speed profile along the trajectory also indicates a smooth acceleration and deceleration pattern, with no abrupt maneuvers required to satisfy the obstacle-avoidance task.
Figure~\ref{fig:always_distances} confirms this behavior quantitatively. The obstacle-clearance signals remain in the safe region for the entire horizon, so the trajectory never enters either forbidden set. The common continuous-time tilt, thrust, and speed constraints are also satisfied throughout the maneuver.

\vspace{-0.2cm}
\begin{figure}[t]
\centerline{\includegraphics[scale=0.48]{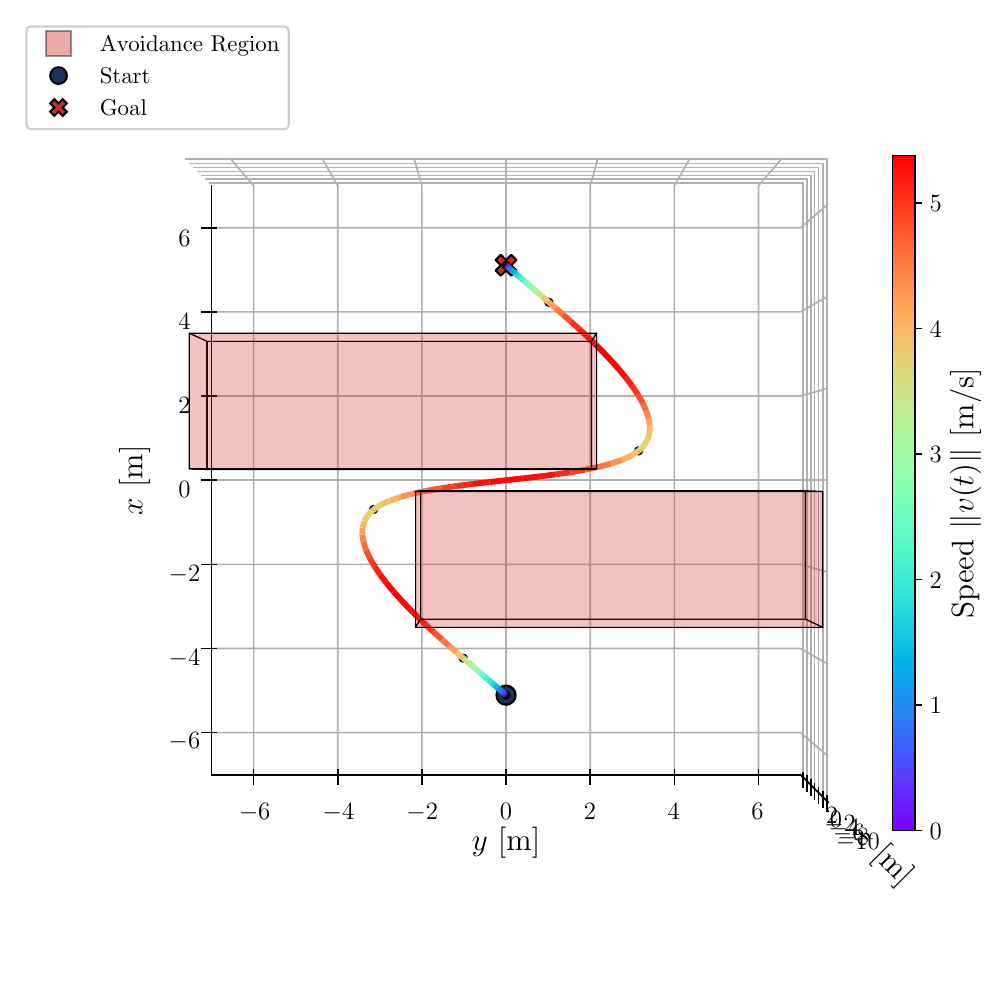}}
\caption{Optimized trajectory for the \always{} obstacle-avoidance example. The vehicle departs from the direct start-to-goal path, passes through the admissible corridor between the two forbidden rectangular regions, and reaches the terminal state without entering either avoidance region. The trajectory is color-coded by speed.}
\label{fig:di_always}
\end{figure}

\begin{figure}[t]
\centerline{\includegraphics[scale=0.45]{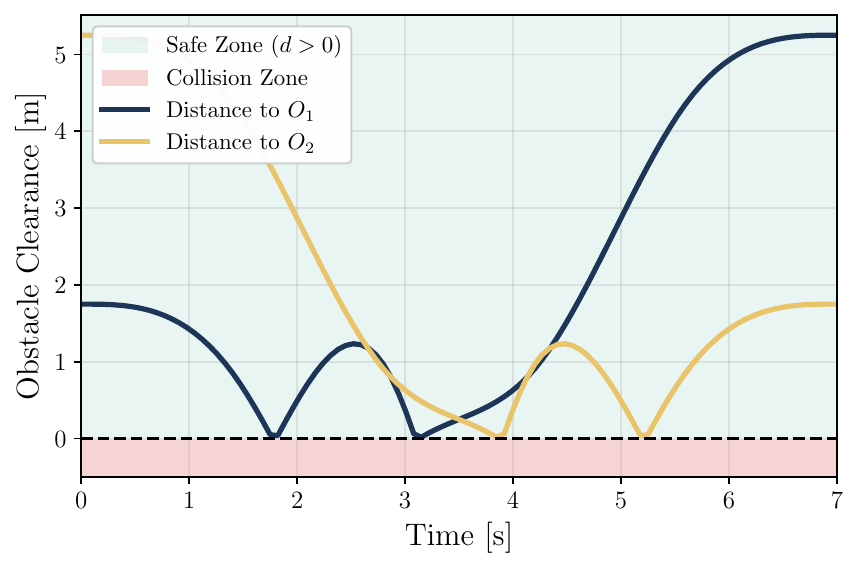}}
\caption{Obstacle-clearance histories for the \always{} example. Both clearance signals remain in the safe region throughout the maneuver, confirming continuous-time satisfaction of the two obstacle-avoidance constraints.}
\label{fig:always_distances}
\end{figure}

\vspace{-0.2cm}
\subsubsection{Example 2: \eventually{} visit three waypoints}
\vspace{-0.2cm}
The second example uses
$
    r_{\mathrm i}=(-10,0,0),
    \,
    r_{\mathrm f}=(6,0,0),
    \,
    t_{\mathrm f}=12~\mathrm{s},
    \,
    K=7,
$
and requires the vehicle to visit three waypoint regions centered at
$
    p_{w,1}=(-8,-5,0),\,
    p_{w,2}=(-6,5,0),\,
    p_{w,3}=(-4,-5,0),
$
all with radius $r_w=0.5$. Define the waypoint margins
\vspace{-0.2cm}
\begin{align*}
    \rho_i(r)
    :=
    r_w^2-\|r-p_{w,i}\|_2^2,
    \qquad i=1,2,3,
\end{align*}
and the corresponding predicates
\vspace{-0.2cm}
\begin{align*}
    \mu_{w,i}(r):=(\rho_i(r)\ge 0),
    \qquad i=1,2,3.
\end{align*}
Then the temporal task is
\vspace{-0.2cm}
\begin{align*}
    \varphi_{\mathrm{evt}}
    :=
    \varphi_{\mathrm{path}}
    \wedge
    \bm F_{[0,t_{\mathrm f}]}\mu_{w,1}
    \wedge
    \bm F_{[0,t_{\mathrm f}]}\mu_{w,2}
    \wedge
    \bm F_{[0,t_{\mathrm f}]}\mu_{w,3}.
\end{align*}
We augment the state as
\vspace{-0.2cm}
\begin{align*}
    x^{\varphi_{\mathrm{evt}}}
    :=
    (x,\eta_{\mathrm p},y_1,z_1,y_2,z_2,y_3,z_3),
\end{align*}
where $\eta_{\mathrm p}$ again enforces the common path constraints, and for each waypoint we introduce a geometric-integral state $y_i$ and an additive state $z_i$:
\vspace{-0.2cm}
\begin{align*}
    \dot y_i(t) &= \frac{y_i(t)}{t_{\mathrm f}}\,\log\!\Big(
        \varepsilon_{\mathrm{evt}}
        +[\rho_i(r)]_-^2
    \Big),
    \qquad
    y_i(0)=1,\\
    \dot z_i(t) &= \frac{1}{t_{\mathrm f}}[\rho_i(r)]_+^2,
    \qquad
    z_i(0)=0,
\end{align*}
The state $y_i$ acts as a geometric-integral measure of how strongly the trajectory stays outside waypoint $i$, while $z_i$ accumulates only when the vehicle is inside waypoint $i$. We define the corresponding waypoint-wise terminal surrogates
\vspace{-0.2cm}
\begin{align*}
    \widehat{\Gamma}_{w,i}
    :=
    \big(c_{\mathrm{evt}}^2+\alpha_i z_i(t_{\mathrm f})\big)^{1/2}
    -
    \big(c_{\mathrm{evt}}^2+\beta_i y_i(t_{\mathrm f})\big)^{1/2},
\end{align*}
for $i=1,2,3$ with positive constants $c_{\mathrm{evt}},\alpha_i,\beta_i$. Thus, entering waypoint $i$ increases $\widehat{\Gamma}_{w,i}$ through $z_i$, whereas remaining outside decreases it through $y_i$.

\vspace{-0.2cm}
The resulting problem is
\begin{empheq}[box=\fbox]{equation*}
\begin{aligned}
    \underset{x^{\varphi_{\mathrm{evt}}}(\cdot),u(\cdot)}{\mathrm{maximize}}
    \quad &
    -w_{\eta}\eta_{\mathrm p}(t_{\mathrm f})
    +
    \sum_{i=1}^{3}\widehat{\Gamma}_{w,i} \\
    \mathrm{subject\ to}\quad
    &
    \dot r(t)=v(t),\qquad
    \dot v(t)=u(t)-g_0e_3,\\
    &
    \dot \eta_{\mathrm p}(t)=\chi_{\theta}(u(t))+\chi_T(u(t))+\chi_v(v(t)),\\
    & \dot y_i(t) = \frac{y_i(t)}{t_{\mathrm f}}\,\log\!\Big( \varepsilon_{\mathrm{evt}} + [\rho_i(r)]_-^2 \Big), \quad \forall i,\\
    & \dot z_i(t) = \frac{1}{t_{\mathrm f}}[\rho_i(r)]_+^2, \quad \forall i,\\
    &
    x(0)\in\mathcal X_{\mathrm i}(r_{\mathrm i}),\qquad
    x(t_{\mathrm f})\in\mathcal X_{\mathrm f}(r_{\mathrm f}),\\
    & u(0)=u(t_{\mathrm f})=u_{\mathrm h}, \qquad  \eta_{\mathrm p}(0)=0, \\
    & y_i(0)=1, \qquad z_i(0)=0, \qquad \forall i.
\end{aligned}
\end{empheq}

\vspace{-0.2cm}
Figure~\ref{fig:di_eventually} shows the optimized trajectory for the three-waypoint \eventually{} example. Rather than proceeding directly to the goal, the trajectory bends toward each waypoint region in turn and then continues to the terminal state. 
The result highlights the role of the continuous-time event formulation. Satisfaction is achieved through the actual continuous-time trajectory rather than being tied to a small set of coarse discretization nodes.
Figure~\ref{fig:eventually_distances} verifies that all three distance traces enter the satisfaction zone $d\le r_w$ at some time in the interval. This behavior is exactly consistent with the three \eventually{} requirements.

\begin{figure}[t]
\centerline{\includegraphics[scale=0.50]{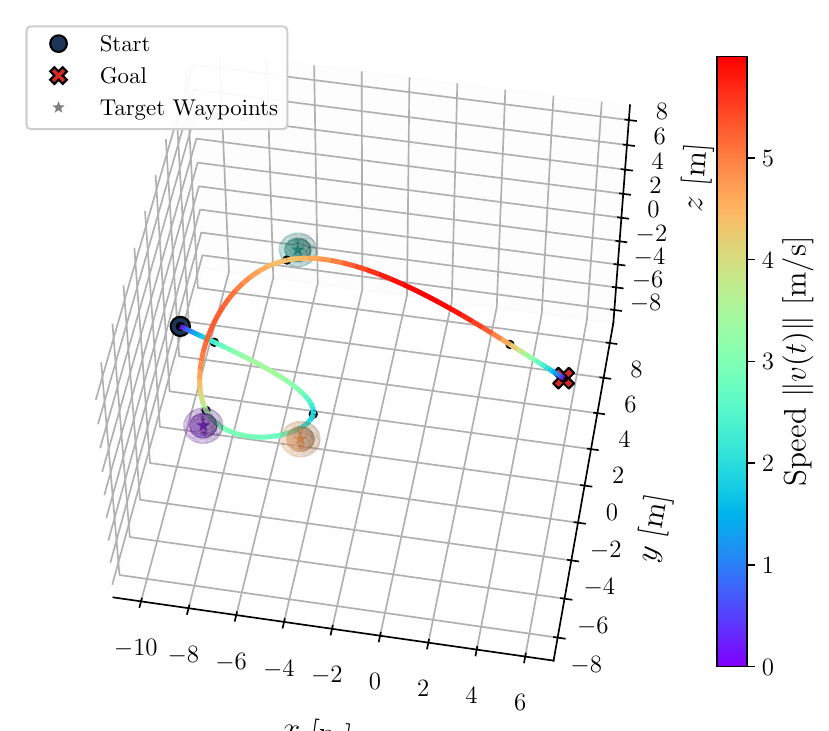}}
\caption{Optimized trajectory for the three-waypoint \eventually{} example. The vehicle successively bends toward and visits all three waypoint regions before proceeding to the terminal state. The trajectory is color-coded by speed.}
\label{fig:di_eventually}
\end{figure}

\begin{figure}[t]
\centerline{\includegraphics[scale=0.43]{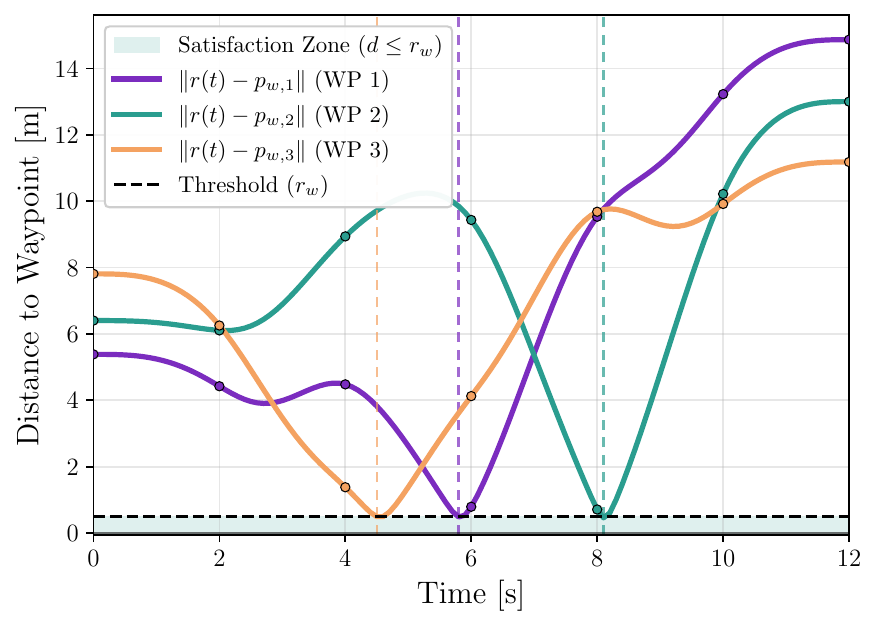}}
\caption{Distances from the vehicle to the three waypoint centers in the \eventually{} example. Each trace enters the satisfaction zone $d\le r_w$ at some time in the interval, confirming continuous-time satisfaction of all three \eventually{} requirements.}
\label{fig:eventually_distances}
\end{figure}

\vspace{-0.2cm}
\subsubsection{Example 3: \until{} charging-station task}
\vspace{-0.2cm}
The third example uses
$
    r_{\mathrm i}=(-6,0,0),
    \,
    r_{\mathrm f}=(6,0,0),
    \,
    t_{\mathrm f}=5.5~\mathrm{s},
    \,
    K=6,
$
and requires the vehicle to keep its speed below a threshold until it reaches a charging-station region centered at
$
    p_{\mathrm c}=(-4,-2,0),
    \,
    d_{\mathrm c}=0.2,
    \,
    v_{\mathrm safe}=2.0~\mathrm{m/s}.
$
Define
\vspace{-0.2cm}
\begin{align*}
    \mu_{\mathrm s}(v)
    &:=
    \big(v_{\mathrm safe}^2-\|v\|_2^2 \ge 0\big),\\
    \mu_{\mathrm c}(r)
    &:=
    \big(d_{\mathrm c}^2-\|r-p_{\mathrm c}\|_2^2 \ge 0\big),
\end{align*}
so that the desired temporal task is
\vspace{-0.2cm}
\begin{align*}
    \varphi_{\mathrm{unt}}
    :=
    \varphi_{\mathrm{path}}
    \wedge
    \big(\mu_{\mathrm s}\,\bm U_{[0,t_{\mathrm f}]}\,\mu_{\mathrm c}\big).
\end{align*}
To keep this example simple, the implementation uses a reduced augmentation tailored to the satisfaction mechanism of the \until{} condition. We augment the state as
\vspace{-0.2cm}
\begin{align*}
    x^{\varphi_{\mathrm{unt}}}:=(x,\eta_{\mathrm p},y,z),
\end{align*}
where $\eta_{\mathrm p}$ handles the common path constraints, $y$ accumulates the pre-arrival speed-threshold violation, and $z$ acts as a multiplicative witness-state for the coupled \until{} condition.

\vspace{-0.2cm}
First, define
\vspace{-0.2cm}
\begin{align*}
    \dot y(t)
    &=
    \frac{1}{t_{\mathrm f}}
    \left[\,v_{\mathrm safe}^2-\|v(t)\|_2^2\,\right]_-^2,
    \quad
    y(0)=0.
\end{align*}
Thus, $y(t)$ remains small only if the speed bound is respected up to time $t$. We then introduce the normalized pre-arrival violation measure
\vspace{-0.2cm}
\begin{align*}
    q(t):=\frac{y(t)}{t+\varepsilon_t}, \text{ where $\varepsilon_t>0$}.
\end{align*}
Next, define the charging-station miss measure
\vspace{-0.2cm}
\begin{align*}
    d(r)
    :=
    \|r-p_{\mathrm c}\|_2^2-d_{\mathrm c}^2,
\end{align*}
which is negative inside the charging region and positive outside. The speed and charging conditions are coupled through
\vspace{-0.2cm}
\begin{align*}
    \chi(r,y,t)
    &:=
    \sqrt{
        C_1^2 +
        \frac{1}{2}
        \left(
            [d(r)]_+^2 + [q(t)]_+^2
        \right)
    }
    - C_1,
\end{align*}
with $C_1>0$. This quantity can be small only when the vehicle is near or inside the charging region and the pre-arrival speed-violation measure is also small.

\vspace{-0.2cm}
Finally, the multiplicative witness-state $z$ evolves according to
\vspace{-0.2cm}
\begin{align*}
    \dot z(t)
    &=
    \frac{z(t)}{t_{\mathrm f}}
    \,
    \log\!\left(
        \chi(r(t),y(t),t)^2 + \varepsilon_{\mathrm u}
    \right),
    \quad
    z(0)=1,
\end{align*}
with $\varepsilon_{\mathrm u}>0$. Since $z(0)=1$ and the dynamics are multiplicative, the terminal value $z(t_{\mathrm f})$ depends on the entire time history of the coupled miss measure $\chi$. We therefore define the terminal surrogate
\vspace{-0.2cm}
\begin{align*}
    \widehat{\Gamma}_{\mathrm{unt}}
    :=
    c_{\mathrm u}
    -
    \sqrt{c_{\mathrm u}^2+z(t_{\mathrm f})},
\end{align*}
with $c_{\mathrm u}>0$. Maximizing $\widehat{\Gamma}_{\mathrm{unt}}$, or equivalently minimizing $z(t_{\mathrm f})$, favors trajectories for which the coupled quantity $\chi$ remains small in the sense required by the \until{} semantics.

\vspace{-0.2cm}
The resulting problem is
\vspace{-0.2cm}
\begin{empheq}[box=\fbox]{equation*}
\begin{aligned}
    \underset{x^{\varphi_{\mathrm{unt}}}(\cdot),u(\cdot)}{\mathrm{maximize}}
    \quad &
    -w_{\eta}\eta_{\mathrm p}(t_{\mathrm f})
    +
    \widehat{\Gamma}_{\mathrm{unt}} \\
    \mathrm{subject\ to}\quad
    &
    \dot r(t)=v(t),\qquad
    \dot v(t)=u(t)-g_0e_3,\\
    &
    \dot \eta_{\mathrm p}(t)=\chi_{\theta}(u(t))+\chi_T(u(t))+\chi_v(v(t)),\\
    &
    \dot y(t)=\frac{1}{t_{\mathrm f}}\left[\,v_{\mathrm safe}^2-\|v(t)\|_2^2\,\right]_-^2,\\
    &
    \dot z(t)=\frac{z(t)}{t_{\mathrm f}}\, \log\!\left(\chi(r(t),y(t),t)^2+\varepsilon_{\mathrm u}\right),\\
    &
    x(0)\in\mathcal X_{\mathrm i}(r_{\mathrm i}),\qquad
    x(t_{\mathrm f})\in\mathcal X_{\mathrm f}(r_{\mathrm f}),\\
    & u(0)=u(t_{\mathrm f})=u_{\mathrm h}, \qquad  \eta_{\mathrm p}(0)=0, \\
    & y(0)=0, \qquad z(0)=1
\end{aligned}
\end{empheq}

\vspace{-0.2cm}
Figures~\ref{fig:di_until}--\ref{fig:until_station_margin} show the optimized trajectory, speed history, and charging-station margin for the \until{} example. The vehicle first diverts toward the charging station and only afterwards proceeds to the terminal state. The speed trace remains below the prescribed threshold $v_{\mathrm safe}$ before the visiting event, while also remaining below the global speed bound for the entire maneuver. The signed charging-station margin is negative before arrival, crosses zero at the station boundary, and becomes positive inside the charging region. The first zero crossing therefore identifies the witness time at which the charging event occurs, exactly as required by the \until{} semantics.

\begin{figure}[t]
\centerline{\includegraphics[scale=0.45]{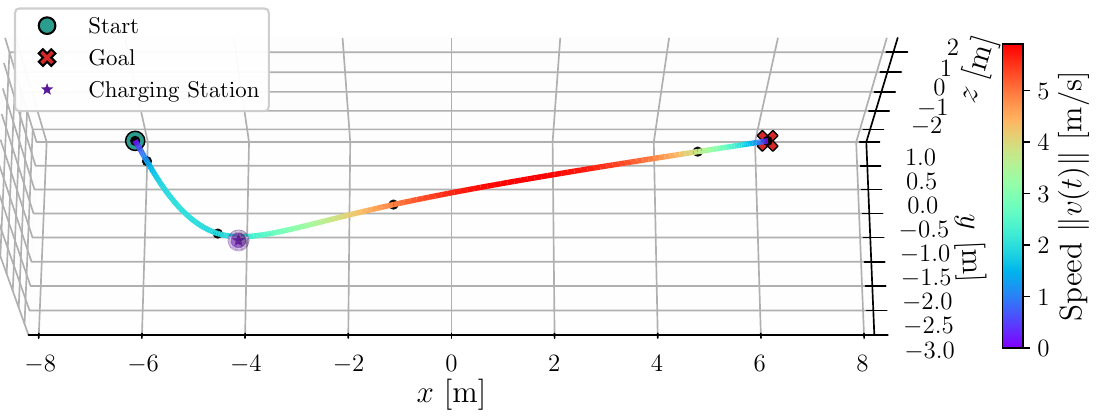}}
\caption{Optimized trajectory for the charging-station \until{} example. The vehicle first reaches the charging-station region and then proceeds to the terminal state, consistent with the continuous-time \until{} specification. The trajectory is color-coded by speed.}
\label{fig:di_until}
\end{figure}

\begin{figure}[t]
\centerline{\includegraphics[scale=0.47]{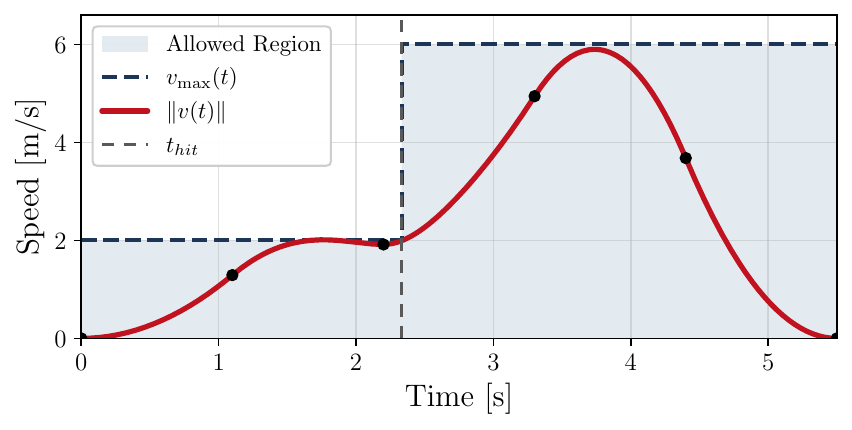}}
\caption{Speed profile for the charging-station \until{} example. The speed remains below the pre-charging threshold $v_{\mathrm safe}$ until the charging event occurs, while also remaining below the global speed bound throughout the maneuver.}
\label{fig:until_speed_profile}
\end{figure}

\begin{figure}[t]
\centerline{\includegraphics[scale=0.47]{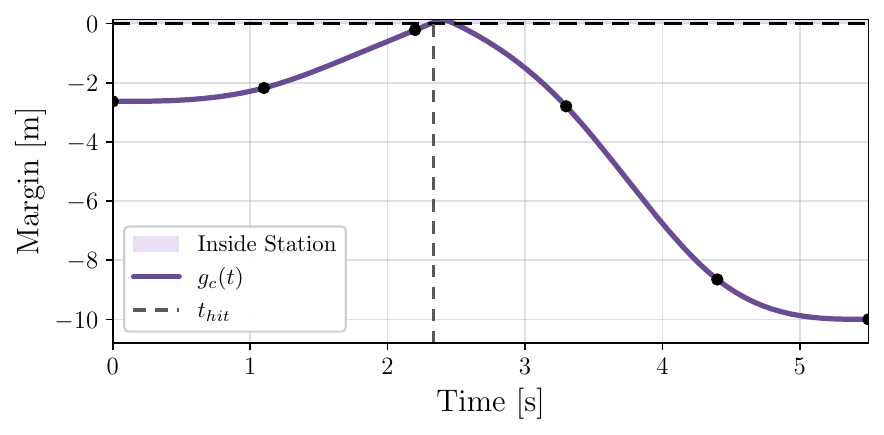}}
\caption{Signed charging-station margin for the \until{} example. The margin is negative outside the charging region, zero on its boundary, and positive inside. The first zero crossing marks the witness time for the charging event.}
\label{fig:until_station_margin}
\end{figure}

\vspace{-0.2cm}
\subsection{Free-final-time $6$-DoF quadrotor example}\label{sec:num_quad}
\vspace{-0.2cm}
For the main example, the physical quadrotor state and control are
$
    x_q := (r,v,\phi,\theta,\psi,p_b,q_b,r_b)\in\mathbb R^{12},
    \,
    u_q := (\tau_{\phi},\tau_{\theta},\tau_{\psi},T)\in\mathbb R^4,
$
where $r=(r_x,r_y,r_z)$ and $v=(v_x,v_y,v_z)$ denote position and velocity, $\Omega := (\phi,\theta,\psi)$ are the Euler angles, $\omega_b := (p_b,q_b,r_b)$ are the body angular rates, $T$ is the thrust magnitude, and $(\tau_\phi,\tau_\theta,\tau_\psi)$ are the body torques. The dynamics are modeled by the standard Newton--Euler equations for a quadrotor, as in~\cite{luukkonen2011modelling}, and are written compactly as
\vspace{-0.2cm}
\begin{align*}
    \dot x_q(t)=f_q(x_q(t),u_q(t)).
\end{align*}
The quadrotor is also subject to the continuous-time bounds
$0.06\,g_0 \le T(t) \le 1.75\,g_0$, $|\tau_\phi(t)|$, $|\tau_\theta(t)|$, $|\tau_\psi(t)| \le 0.25$, together with
$\|v(t)\|_2 \le 20$, $-5 \le r_z(t) \le 10$, $|\phi(t)|,\;|\theta(t)| \le 45^\circ$, $|p_b(t)|$, $|q_b(t)|$, $|r_b(t)| \le 100^\circ/\mathrm{s}$.
In the implementation, these continuous-time state and control bounds are handled through an additional accumulated path-penalty state $\eta_{\mathrm q}$, analogous to the path-constraint treatment used in the double-integrator examples.

\vspace{-0.2cm}
The problem uses $K=32$ shooting nodes and an initial final-time guess of $10~\mathrm{s}$. The final time is free and is optimized through the time-dilation variables in the multiple-shooting transcription. In the reported solution, the flight time converges to
$
    t_{\mathrm f}^{\star}=7.325~\mathrm{s}.
$

\vspace{-0.2cm}
Define the compact boundary-condition sets
\vspace{-0.2cm}
\begin{align*}
    \mathcal X_{q,\mathrm i}
    &:=
    \left\{
        x_q :
        r=(0,-10,1),\;
        v=0,\;
        \Omega=0,\;
        \omega_b=0
    \right\},\\
    \mathcal X_{q,\mathrm f}
    &:=
    \left\{
        x_q :
        8 \le r_x \le 10,\;
        v=0,\;
        \phi=\theta=0,\;
        \omega_b=0
    \right\}.
\end{align*}
To represent the moving predicates autonomously, the implementation augments the state with an internal clock state $\vartheta$, a path-penalty state $\eta_{\mathrm q}$, six obstacle-violation states, and two eventually-type waypoint pairs:
\vspace{-0.2cm}
\begin{align*}
    x^{\varphi_{\mathrm{q}}}
    :=
    \big(
        x_q,\vartheta,\eta_{\mathrm q},
        \xi_1^{\ell},\xi_1^{u},\xi_2^{\ell},\xi_2^{u},\xi_3^{\ell},\xi_3^{u},
        y_1,z_1,y_2,z_2
    \big).
\end{align*}
The internal clock is governed by
\vspace{-0.3cm}
\begin{align*}
    \dot\vartheta(t)=1,
    \qquad
    \vartheta(0)=0,
\end{align*}
and is used to parameterize the moving waypoints and moving obstacle gaps.

\vspace{-0.2cm}
The two moving waypoint centers are
\vspace{-0.3cm}
\begin{align*}
    c_1(\vartheta)
    &:=
    \left( \! 
        -5,\;
        -16 \!+\! 11.5 \left(\sin\left( \frac{\pi \vartheta}{5.655}\!-\!\frac{\pi}{2}\right)\!+\!1\right),\;
        1
    \! \right),\\
    c_2(\vartheta)
    &:=
    \left( \! 
        -10,\;
        7 \!-\! 11.5 \left(\sin\left(\frac{\pi \vartheta}{4.38}\!-\!\frac{\pi}{2}\right)\!+\!1\right),\;
        1
    \! \right),
\end{align*}
both with radius $r_w=0.5$. For each waypoint $i=1,2$, define
\vspace{-0.2cm}
\begin{align*}
    \rho_i(r,\vartheta):=r_w^2-\|r-c_i(\vartheta)\|_2^2,
\end{align*}
and the predicates
\vspace{-0.3cm}
\begin{align*}
    \mu_{w,i}(r,\vartheta):=(\rho_i(r,\vartheta)\ge 0).
\end{align*}
The moving-waypoint task is therefore
\vspace{-0.2cm}
\begin{align*}
    \varphi_{\mathrm{wp}}
    :=
    \bm F_{[0,t_{\mathrm f}]}\mu_{w,1}
    \wedge
    \bm F_{[0,t_{\mathrm f}]}\mu_{w,2}.
\end{align*}

\vspace{-0.4cm}
As in the double-integrator \eventually{} example, each waypoint is encoded by a geometric-integral state $y_i$ and an additive state $z_i$:
\vspace{-0.2cm}
\begin{align*}
    \dot y_i(t)
    &=
    \frac{1}{t_{\mathrm f}}
    y_i(t)\log\!\Big(
        \varepsilon_{\mathrm e}
        +
        [\rho_i(r(t),\vartheta(t))]_-^2
    \Big),\\
    \dot z_i(t)
    &=
    \frac{1}{t_{\mathrm f}}
    [\rho_i(r(t),\vartheta(t))]_+^2,
\end{align*}
with
$
    y_i(0)=1,
    \, 
    z_i(0)=0,
    \, 
    i=1,2.
$
The associated waypoint surrogates are
\vspace{-0.2cm}
\begin{align*}
    \widehat{\Gamma}_{w,i}^{q}
    :=
    \sqrt{c_q^2+z_i(t_{\mathrm f})}
    -
    \sqrt{c_q^2+y_i(t_{\mathrm f})},
    \qquad i=1,2.
\end{align*}

\vspace{-0.4cm}
The moving obstacle field is represented by three $x$-bands,
$
    [a_1,b_1]=[0.5,1.5],
    \,
    [a_2,b_2]=[3.5,4.5],
    \,
    [a_3,b_3]=[6.5,7.5],
$
with time-varying gap boundaries
\vspace{-0.2cm}
\begin{align*}
    \Delta(\vartheta)&:=3(\sin(\vartheta-\pi/2)+1),\\
    \ell_1(\vartheta)&=-1-\Delta(\vartheta),\qquad u_1(\vartheta)=1-\Delta(\vartheta),\\
    \ell_2(\vartheta)&=-10-\Delta(\vartheta),\qquad u_2(\vartheta)=-8-\Delta(\vartheta),\\
    \ell_3(\vartheta)&=-1+\Delta(\vartheta),\qquad u_3(\vartheta)=1+\Delta(\vartheta).
\end{align*}
The corresponding implication-type obstacle specifications are
\vspace{-0.2cm}
\begin{align*}
    \varphi_{\mathrm{obs},j}^{\ell}(r,\vartheta)
    &:=
    \big(a_j\le r_x\le b_j \Rightarrow r_y\ge \ell_j(\vartheta)\big),\\
    \varphi_{\mathrm{obs},j}^{u}(r,\vartheta)
    &:=
    \big(a_j\le r_x\le b_j \Rightarrow r_y\le u_j(\vartheta)\big),
    \qquad 
\end{align*}
for $j=1,2,3$. Hence,
\vspace{-0.2cm}
\begin{align*}
    \varphi_{\mathrm{obs}}
    :=
    \bm G_{[0,t_{\mathrm f}]}
    \Bigg(
        \bigwedge_{j=1}^{3}
        \varphi_{\mathrm{obs},j}^{\ell}
        \wedge
        \varphi_{\mathrm{obs},j}^{u}
    \Bigg).
\end{align*}
In the implementation, these six moving implication constraints are encoded by the auxiliary states
\vspace{-0.3cm}
\begin{align*}
    \dot \xi_j^{\ell}(t)
    &=
    \chi_j^{\ell}(r(t),\vartheta(t)),
    \qquad
    \xi_j^{\ell}(0)=0, \\
    \dot \xi_j^{u}(t)
    &=
    \chi_j^{u}(r(t),\vartheta(t)),
    \qquad
    \xi_j^{u}(0)=0,
\end{align*}
for $j=1,2,3$, where
\vspace{-0.3cm}
\begin{align*}
    \chi_j^{\ell}(r,\vartheta)
    &:=
    [a_j-r_x]_-^2\,[r_x-b_j]_-^2\,[r_y-\ell_j(\vartheta)]_-^2,\\
    \chi_j^{u}(r,\vartheta)
    &:=
    [a_j-r_x]_-^2\,[r_x-b_j]_-^2\,[u_j(\vartheta)-r_y]_-^2.
\end{align*}
Each function $\chi_j^{\ell}$ or $\chi_j^{u}$ vanishes when the corresponding moving implication constraint is satisfied, and becomes positive only under violation. For compactness, define the total accumulated moving-obstacle violation as
\vspace{-0.2cm}
\begin{align*}
    \Xi_{\mathrm{obs}}(t)
    :=
    \sum_{j=1}^{3}\big(\xi_j^{\ell}(t)+\xi_j^{u}(t)\big).
\end{align*}
We then define the corresponding obstacle-avoidance surrogate
\vspace{-0.2cm}
\begin{align*}
    \widehat{\Gamma}_{\mathrm{obs}}:=-\Xi_{\mathrm{obs}}(t_{\mathrm f}).
\end{align*}
Combining the temporal requirements, the main logical task for the quadrotor is
\vspace{-0.2cm}
\begin{align*}
    \varphi_{\mathrm{q}}
    :=
    \varphi_{\mathrm{obs}} \wedge \varphi_{\mathrm{wp}},
\end{align*}
together with the terminal end-zone constraint encoded in $\mathcal X_{q,\mathrm f}$.

\vspace{-0.2cm}
Finally, define the overall terminal surrogate
\vspace{-0.2cm}
\begin{align*}
    \widehat{\Gamma}_{\mathrm{q}}
    :=
    \widehat{\Gamma}_{w,1}^{q}
    +
    \widehat{\Gamma}_{w,2}^{q}
    +
    w_{\Xi}\widehat{\Gamma}_{\mathrm{obs}}
    -
    w_{\eta}\eta_{\mathrm q}(t_{\mathrm f})
    -
    w_t t_{\mathrm f}.
\end{align*}

\vspace{-0.3cm}
The resulting free-final-time problem is written as
\vspace{-0.2cm}
\begin{empheq}[box=\fbox]{equation*}
\begin{aligned}
    \underset{x^{\varphi_{\mathrm{q}}},u_q,t_{\mathrm f}}{\mathrm{maximize}}
    \;\; &
    \widehat{\Gamma}_{\mathrm{q}} \\
    \mathrm{subject\ to}\;\;
    &
    \dot x_q(t)=f_q(x_q(t),u_q(t)),\\
    &
    \dot\vartheta(t)=1,\\
    &
    \dot \eta_{\mathrm q}(t)=\chi_{\mathrm{path},q}(x_q(t),u_q(t)),\\
    &
    \dot y_i(t) \!=\! \frac{y_i(t)}{t_{\mathrm f}}\log\!\left(\!\varepsilon_{\mathrm e}\!+\![\rho_i(r(t),\vartheta(t))]_-^2\!\right),
    \forall i,
    \\
    &
    \dot z_i(t)=\frac{1}{t_{\mathrm f}}[\rho_i(r(t),\vartheta(t))]_+^2,\quad \forall i,\\
    &
    \dot \xi_j^{\ell}(t)=\chi_j^{\ell}(r(t),\vartheta(t)),\quad \forall j,\\
    &
    \dot \xi_j^{u}(t)=\chi_j^{u}(r(t),\vartheta(t)),\quad \forall j,\\
    &
    x_q(0)\in\mathcal X_{q,\mathrm i},\quad
    x_q(t_{\mathrm f})\in\mathcal X_{q,\mathrm f},\\
    &
    \vartheta(0)=0, \quad \eta_{\mathrm q}(0)=0, \\
    &
    y_i(0)=1,\quad z_i(0)=0, \quad
    \forall i, \\
    &
    \xi_j^{\ell}(0)=0,\quad \xi_j^{u}(0)=0,\quad \forall j.
\end{aligned}
\end{empheq}

\vspace{-0.2cm}
The quadrotor example is the main numerical demonstration of the paper. Here, the vehicle must coordinate its motion with both the moving waypoint regions and the moving obstacle gaps while simultaneously satisfying nonlinear rigid-body dynamics, terminal-set constraints, and state/control bounds. Since both the waypoint centers and the obstacle boundaries vary continuously in time, satisfaction only at the shooting nodes would be insufficient, and continuous-time evaluation is therefore essential in this example.

\vspace{-0.2cm}
Figure~\ref{fig:qf_traj} shows that the optimized $3$D trajectory first intercepts the second moving waypoint region at approximately $t=2.26~\mathrm s$, then the first moving waypoint region at approximately $t=3.11~\mathrm s$, and subsequently passes through the three moving doorway gaps at approximately $t=3.79~\mathrm s$, $t=4.98~\mathrm s$, and $t=6.38~\mathrm s$ before reaching the terminal end zone. This ordering is not imposed a priori; rather, it emerges from the optimizer’s need to synchronize the trajectory with the moving waypoint and obstacle geometry. In the reported solution, the flight-time variable is initialized at $10~\mathrm s$ and converges to $7.325~\mathrm s$, showing that the optimizer adapts the overall traversal time in order to coordinate the maneuver with the oscillatory target and obstacle dynamics. The final continuous-time trajectory satisfies both moving-waypoint visitation requirements and avoids all six moving obstacle constraints throughout the horizon.

\vspace{-0.2cm}
Figure~\ref{fig:qf_states} confirms that the optimized trajectory remains within the prescribed thrust, torque, speed, altitude, attitude, and body-rate bounds over the continuous-time trajectory representation.

\begin{figure}[t]
\centerline{\includegraphics[scale=0.30]{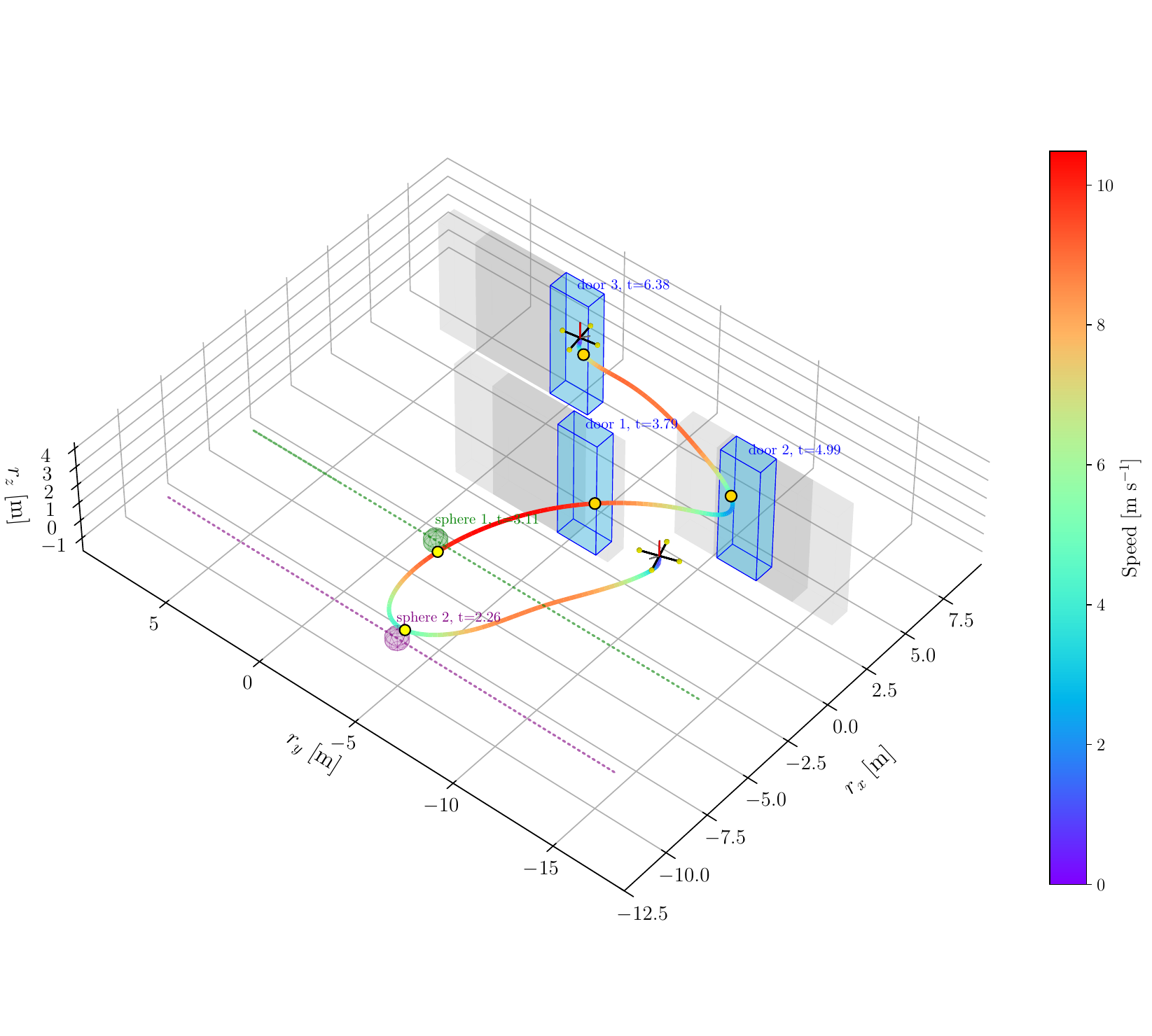}}
\caption{Optimized $3$D trajectory for the free-final-time $6$-DoF quadrotor example. The quadrotor first intercepts the two moving waypoint regions and then passes through the three moving doorway gaps while avoiding the shaded obstacle regions before reaching the terminal end zone. The annotated snapshots indicate the corresponding event times, and the trajectory is color-coded by speed.}
\label{fig:qf_traj}
\end{figure}

\begin{figure}[t]
\centerline{\includegraphics[scale=0.45]{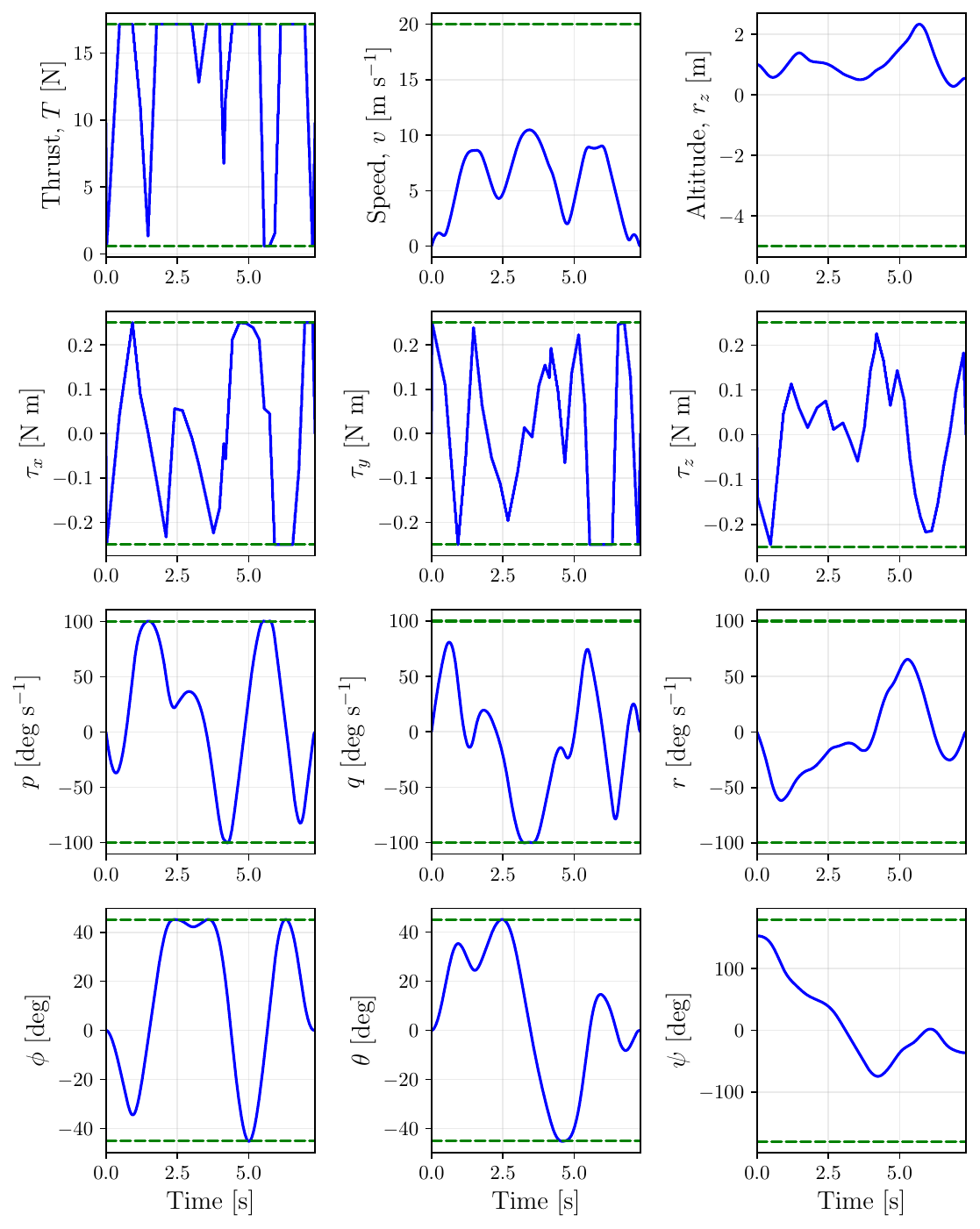}}
\caption{Representative state and input histories for the free-final-time $6$-DoF quadrotor example, showing that the optimized continuous-time trajectory remains within the prescribed thrust, torque, speed, altitude, attitude, and body-rate bounds.}
\label{fig:qf_states}
\end{figure}

\vspace{-0.2cm}
\section{Conclusion}
\vspace{-0.2cm}

This paper presented a successive convexification framework for trajectory optimization with continuous-time satisfaction of Signal Temporal Logic specifications. Built on the smooth and exact GMSR parameterization, the framework combines augmentation-based CT-STL modeling with time-dilation, finite-dimensional control parameterization, multiple-shooting discretization, and a convergence-guaranteed prox-convex SCP scheme. The proposed realization embeds temporal aggregation into auxiliary continuous-time dynamics, making the temporal-logic sensitivities compatible with the same multiple-shooting and automatic-differentiation machinery used for the original dynamics. An exact dense-time realization was also summarized as a complementary construction and related to the prior dense-time formulation in~\cite{uzun2026smooth}.

\vspace{-0.2cm}
The numerical examples showed that the proposed approach can handle continuous-time \always{}, \eventually{}, and \until{} specifications, as well as a more challenging free-final-time $6$-DoF quadrotor problem with moving waypoints and moving obstacle constraints. Overall, the framework provides a smooth and optimization-friendly way to incorporate rich continuous-time temporal logic requirements into nonlinear trajectory generation.
{\RaggedRight%
\bibliographystyle{unsrturl}      
\bibliography{ref_dis}%
}
\section{Appendix}

\vspace{-0.2cm}
\subsection{Feasibility of an FOH parameterization of the dilation factor}
\label{app:foh_dilation}
\vspace{-0.2cm}
\begin{prop}[Why ZOH is used for $s(\tau)$]
\label{prop:foh_dilation}
If the dilation factor were parameterized by FOH, then the physical interval lengths would satisfy
\vspace{-0.2cm}
\begin{align*}
    \Delta t_k=\frac12(s_k+s_{k+1})\Delta\tau.
\end{align*}
As a result, the normalized interval-length sequence $\Delta_k=\Delta t_k/\Delta\tau$ would have to satisfy a coupled linear system $As=\Delta$ with $s\ge 0$. Such a system is feasible if and only if every alternating sum over every contiguous odd-length block of $\Delta$ is nonnegative, namely
\vspace{-0.2cm}
\begin{align*}
    \sum_{r=i}^{j}(-1)^{r-i}\Delta_r \ge 0,
    \quad
    \forall\, 1\le i\le j\le K-1,\;\; j-i\ \text{even}.
\end{align*}
Hence, FOH imposes a structural restriction on the realizable physical interval lengths, so not every positive interval-length sequence can be represented by nonnegative nodal dilation values.
\end{prop}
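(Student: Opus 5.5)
The plan is to write the FOH relation as a bidiagonal recursion, parameterize its whole solution set by one free scalar, and then show that nonnegativity of all nodal values reduces to the stated alternating-sum inequalities. Under FOH, $\Delta t_k=\int_{\tau_k}^{\tau_{k+1}}s(\tau)\,\mathrm d\tau=\tfrac12(s_k+s_{k+1})\Delta\tau$ (trapezoid on a linear function). So the system $As=\Delta$ is
\begin{align*}
s_k+s_{k+1}=2\Delta_k,\qquad k=1,\dots,K-1,
\end{align*}
where $A\in\mathbb R^{(K-1)\times K}$ is bidiagonal with entries $\tfrac12$. Every solution is determined by the free value $\sigma:=s_1$. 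Define the alternating prefix sums $T_0:=0$ and $T_j:=\sum_{r=1}^{j}(-1)^{j-r}\Delta_r$. Induction on the recursion $s_{j+1}=2\Delta_j-s_j$ gives
\begin{align*}
s_{j+1}=2T_j+(-1)^j\sigma,\qquad j=0,\dots,K-1.
\end{align*}

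The constraints $s\ge0$ then become one-dimensional interval conditions on $\sigma$. For even $j$ (including $j=0$) they read $\sigma\ge -2T_j$. For odd $j$ they read $\sigma\le 2T_j$. A feasible $\sigma$ exists if and only if every lower bound is at most every upper bound, that is,
\begin{align*}
T_m+T_n\ge0\quad\text{for all even } m\in\{0,\dots,K-1\} \text{ and odd } n\in\{1,\dots,K-1\}.
\end{align*}

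The key step is to identify each $T_m+T_n$ as a block alternating sum. Suppose first $m<n$. Then $T_n=(-1)^{n-m}T_m+\sum_{r=m+1}^{n}(-1)^{n-r}\Delta_r$, and since $n-m$ is odd, $T_n+T_m=\sum_{r=m+1}^{n}(-1)^{n-r}\Delta_r$. This block has odd length, so $(-1)^{n-r}=(-1)^{r-(m+1)}$. Hence the quantity is exactly $\sum_{r=i}^{j}(-1)^{r-i}\Delta_r$ with $i=m+1$, $j=n$. The case $m>n$ is symmetric: $T_m+T_n$ is the block sum with $i=n+1$, $j=m$.

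Conversely, I will check that every pair $1\le i\le j\le K-1$ with $j-i$ even arises this way. Take $\{i-1,j\}$: these two indices have opposite parity. If $i-1$ is even, set $m=i-1$ and $n=j$. Otherwise set $n=i-1$ and $m=j$, which is even because $j\equiv i\pmod 2$. Hence the family of pairwise conditions coincides exactly with the stated family, and the equivalence follows.

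The main place needing care is this parity bookkeeping. Three checks matter here. The index $j=0$ must be included, since it encodes $s_1\ge0$. The signs of the block sums must match the stated $(-1)^{r-i}$ convention. The bound $K-1$ on $j$ must come from the fact that the largest index is $s_K=s_{(K-1)+1}$.

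Finally, the ``Hence'' clause needs a concrete counterexample. With $K=4$ and $\Delta=(1,3,1)$, all entries are positive but $\Delta_1-\Delta_2+\Delta_3=-1<0$. So this positive interval-length sequence has no nonnegative FOH representation, while ZOH realizes any positive sequence via $s_k=\Delta_k$.
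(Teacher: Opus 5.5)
Your proof is correct, but it takes a different route from the paper. The paper uses Farkas' lemma: $As=\Delta$, $s\ge 0$ is feasible iff $\Delta^\top y\le 0$ for every $y$ in the cone $\{y: A^\top y\le 0\}$. It then asserts, without justification, that this cone is generated by the alternating-sign vectors $y^{(i,j)}$ supported on odd-length blocks, and testing these rays gives the stated inequalities. You instead use the fact that the bidiagonal $A$ has a one-dimensional kernel. All solutions then lie on the line $s_{j+1}=2T_j+(-1)^j\sigma$, and nonnegativity reduces to the interval condition $\max_{m\ \mathrm{even}}(-2T_m)\le\sigma\le\min_{n\ \mathrm{odd}}2T_n$. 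Your parity bookkeeping correctly puts the pairwise conditions $T_m+T_n\ge 0$ in bijection with the block sums. In fact $T_m+T_n=-\Delta^\top y^{(i,j)}$, so you recover exactly the paper's certificates without having to enumerate the extreme rays. Your route has three advantages. It is elementary and self-contained. It is constructive for sufficiency: any $\sigma$ in the nonempty interval gives an explicit nonnegative $s$. It also avoids the extreme-ray characterization, which the paper would need to support with a pointedness and active-constraint counting argument. The paper's route has its own advantages. The vectors $y^{(i,j)}$ appear directly as infeasibility certificates. The Farkas framework also does not depend on the kernel being one-dimensional, so it adapts more naturally to other hold parameterizations, where elimination down to a single free parameter is not available.
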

\begin{proof}
Suppose the dilation factor is parameterized by FOH. Then
\vspace{-0.2cm}
\begin{align*}
    \Delta t_k = \frac12(s_k+s_{k+1})\Delta\tau,
    \qquad k=1,\dots,K-1.
\end{align*}
With $\Delta_k:=\Delta t_k/\Delta\tau$, realizability of a prescribed interval-length sequence is equivalent to
\vspace{-0.2cm}
\begin{align}
    As=\Delta,\qquad s\ge 0,
    \label{eq:foh_primal}
\end{align}
where
\vspace{-0.2cm}
\[
A=
\begin{bmatrix}
\frac12&\frac12&0&\cdots&0\\
0&\frac12&\frac12&\ddots&\vdots\\
\vdots&&\ddots&\ddots&0\\
0&\cdots&0&\frac12&\frac12
\end{bmatrix}
\in\mathbb R^{(K-1)\times K}.
\]
By Farkas' lemma, \eqref{eq:foh_primal} is feasible if and only if
\vspace{-0.2cm}
\begin{align}
    \Delta^\top y \le 0
    \qquad
    \forall\, y \in \mathbb R^{K-1}\ \text{s.t.}\ A^\top y \le 0.
    \label{eq:farkas_foh}
\end{align}
The inequalities $A^\top y\le 0$ are
$
    y_1 \le 0,
    \,
    y_{i-1}+y_i \le 0 \ \ (2\le i\le K-1),
    \,
    y_{K-1}\le 0.
$
The cone defined by these inequalities is polyhedral, and its extreme rays are supported on contiguous odd-length blocks with alternating signs:
\vspace{-0.2cm}
\begin{align*}
    y_k^{(i,j)}=
    \begin{cases}
        (-1)^{k-i+1}, & i\le k\le j,\\
        0, & \text{otherwise},
    \end{cases}
\end{align*}
$1\le i\le j\le K-1,\ \ j-i\ \text{even}.$ Therefore, it suffices to check \eqref{eq:farkas_foh} on these extreme rays, which yields
\vspace{-0.2cm}
\begin{align*}
    \Delta^\top y^{(i,j)} \le 0
    \quad\Longleftrightarrow\quad
    \sum_{r=i}^{j}(-1)^{r-i}\Delta_r \ge 0,
\end{align*}
$\forall\, 1\le i\le j\le K-1,\ \ j-i\ \text{even}$. This proves Proposition~\ref{prop:foh_dilation}.
\end{proof}


For example, the positive interval-length sequence $\Delta=[1,\,3,\,1]$ is infeasible under FOH since $1-3+1=-1<0$. Under ZOH, by contrast, the interval lengths are fully decoupled through $\Delta t_k=s_k\Delta\tau$, so any positive interval-length sequence can be represented directly. This distinction is also important numerically for optimization methods. Under FOH, adjacent interval lengths share nodal variables, so changing a single local time step generally requires coordinated updates across a larger portion of the sequence $\{s_k\}_{k=1}^{K-1}$. This reduces the optimizer's ability to adjust the temporal mesh locally and can slow convergence in practice. Under ZOH, each interval duration is decoupled from its neighbors, allowing local time allocations to be adjusted independently.

\vspace{-0.2cm}
\subsection{Linearization of the multiple-shooting defects}
\label{app:flow_map_linearization}
\vspace{-0.2cm}
For gradient-based optimization, one also needs the Jacobians of the flow map and of the multiple-shooting defect constraints. These Jacobians can be computed by integrating the associated variational equations alongside the state trajectory on each shooting interval.

\vspace{-0.2cm}
Recall that the physical control is parameterized by FOH, whereas the time-dilation factor is parameterized by ZOH. On interval $[\tau_k,\tau_{k+1}]$, define
\vspace{-0.2cm}
\begin{align*}
    \lambda_k^{-}(\tau)
    &:=
    \frac{\tau_{k+1}-\tau}{\Delta\tau},
    &
    \lambda_k^{+}(\tau)
    &:=
    \frac{\tau-\tau_k}{\Delta\tau},
    \;\;
    \tau\in[\tau_k,\tau_{k+1}],
\end{align*}
so that
\vspace{-0.2cm}
\begin{align*}
    u(\tau)
    =
    \lambda_k^{-}(\tau)u_k
    +
    \lambda_k^{+}(\tau)u_{k+1},
    \qquad
    s(\tau)=s_k.
\end{align*}
The corresponding hybrid input is
\vspace{-0.2cm}
\begin{align*}
    \tilde u(\tau)
    =
    \begin{bmatrix}
        u(\tau)\\
        s_k
    \end{bmatrix}.
\end{align*}
Along the interval trajectory, define
\vspace{-0.2cm}
\begin{align*}
    A_k(\tau)
    &:=
    \nabla_{\tilde x} f\big(\tilde x(\tau),\tilde u(\tau)\big),
    \\
    B_k(\tau)
    &:=
    \nabla_{u} f\big(\tilde x(\tau),\tilde u(\tau)\big),
    \\
    S_k(\tau)
    &:=
    \partial_{s} f\big(\tilde x(\tau),\tilde u(\tau)\big).
\end{align*}
The sensitivities of the propagated state with respect to the active local variables on interval $k$ are obtained from the linear time-varying initial-value problems
\vspace{-0.2cm}
\begin{align*}
    \frac{d}{d\tau}\Phi_k^{x}(\tau)
    &\!=\!
    A_k(\tau)\Phi_k^{x}(\tau),
    &
    \Phi_k^{x}(\tau_k)
    &\!=\!
    I,
    \nonumber\\
    \frac{d}{d\tau}\Phi_k^{-}(\tau)
    &\!=\!
    A_k(\tau)\Phi_k^{-}(\tau)
    \!+\!
    B_k(\tau)\lambda_k^{-}(\tau),
    &
    \Phi_k^{-}(\tau_k)
    &\!=\!
    0,
    \nonumber\\
    \frac{d}{d\tau}\Phi_k^{+}(\tau)
    &\!=\!
    A_k(\tau)\Phi_k^{+}(\tau)
    \!+\!
    B_k(\tau)\lambda_k^{+}(\tau),
    &
    \Phi_k^{+}(\tau_k)
    &\!=\!
    0,
    \nonumber\\
    \frac{d}{d\tau}\Phi_k^{s}(\tau)
    &\!=\!
    A_k(\tau)\Phi_k^{s}(\tau)
    \!+\!
    S_k(\tau),
    &
    \Phi_k^{s}(\tau_k)
    &\!=\!
    0,
\end{align*}
for $\tau\in[\tau_k,\tau_{k+1}]$. Their terminal values yield the Jacobians of the flow map:
\vspace{-0.2cm}
\begin{align*}
    \bar A_k
    &:=
    \Phi_k^{x}(\tau_{k+1})
    =
    \partial_{\tilde x_k}
    f_{\tau_k}^{\tau_{k+1}}(\tilde x_k,u_k,u_{k+1},s_k),
    \nonumber\\
    \bar B_k^{-}
    &:=
    \Phi_k^{-}(\tau_{k+1})
    =
    \partial_{u_k}
    f_{\tau_k}^{\tau_{k+1}}(\tilde x_k,u_k,u_{k+1},s_k),
    \nonumber\\
    \bar B_k^{+}
    &:=
    \Phi_k^{+}(\tau_{k+1})
    =
    \partial_{u_{k+1}}
    f_{\tau_k}^{\tau_{k+1}}(\tilde x_k,u_k,u_{k+1},s_k),
    \nonumber\\
    \bar S_k
    &:=
    \Phi_k^{s}(\tau_{k+1})
    =
    \partial_{s_k}
    f_{\tau_k}^{\tau_{k+1}}(\tilde x_k,u_k,u_{k+1},s_k).
\end{align*}
Consequently, the Jacobians of the defect constraint
$
D_k(Z)=\tilde x_{k+1}
-
f_{\tau_k}^{\tau_{k+1}}(\tilde x_k,u_k,u_{k+1},s_k)
$
are
\vspace{-0.2cm}
\begin{align*}
    \partial_{\tilde x_{k+1}} D_k(Z)
    &=
    I,
    &
    \partial_{\tilde x_k} D_k(Z)
    &=
    -\bar A_k,
    \\
    \partial_{u_k} D_k(Z)
    &=
    -\bar B_k^{-},
    &
    \partial_{u_{k+1}} D_k(Z)
    &=
    -\bar B_k^{+},
    \\
    \partial_{s_k} D_k(Z)
    &=
    -\bar S_k.
\end{align*}
If one works directly with the hybrid nodal inputs $\tilde u_k$ and $\tilde u_{k+1}$, the corresponding Jacobians are obtained by assembling these blocks with zero blocks for inactive components, since $s_{k+1}$ does not affect the propagation on interval $k$ under the ZOH parameterization.

The same variational equations may also be integrated only up to an intermediate integration subnode $\tau\in[\tau_k,\tau_{k+1}]$. This yields the Jacobians of the intermediate subnode states with respect to the local variables $\tilde x_k$, $u_k$, $u_{k+1}$, and $s_k$. This observation is useful for the dense-time realization summarized in Appendix~\ref{ssec:exact_dense_ctstl}, where STL robustness values are evaluated on integration subnodes and differentiated with respect to the original nodal decision variables.

\vspace{-0.2cm}
\subsection{Exact dense-time realization on the integration grid}\label{ssec:exact_dense_ctstl}
\vspace{-0.2cm}
For completeness, and following the dense-time CT-STL construction of~\cite{uzun2026smooth}, we summarize an exact realization of the CT-STL temporal operators on the dense integration grid induced by the dynamics discretization. In this realization, no operator-specific temporal auxiliary states are introduced. Instead, the CT-STL semantics are evaluated directly on the same dense numerical trajectory representation already used to integrate the dynamics. This yields an exact realization on the chosen dense-time trajectory representation, up to the numerical accuracy of the underlying integration scheme, without introducing any additional approximation parameter.

\vspace{-0.2cm}
Under time-dilation, both the predicates and the temporal intervals are mapped from the physical-time domain to the dilated-time domain. In this subsection, $\tilde x(\tau)$ denotes the time-dilated system trajectory on which the temporal operators are evaluated. For example, if
\vspace{-0.2cm}
\begin{align*}
    (x,t)\models \varphi_1 \bm U_{[a,b]} \varphi_2,
\end{align*}
then, at the corresponding dilated time $\tau$ satisfying $\tilde t(\tau)=t$,
\vspace{-0.2cm}
\begin{align*}
    (\tilde x,\tau)\models \tilde\varphi_1 \bm U_{[\tau_a,\tau_b]} \tilde\varphi_2,
\end{align*}
where $\tilde\varphi_i := (\tilde g_i(\tilde x(\tau))\ge 0)$, $\tilde g_i(\tilde x(\tau)) := g_i(x(t))$, and the dilated relative bounds satisfy
\vspace{-0.2cm}
\begin{align*}
    \tilde t(\tau+\tau_a)=t+a,
    \qquad
    \tilde t(\tau+\tau_b)=t+b.
\end{align*}
The dilated bounds generally depend on the evaluation time and on the time-dilation trajectory, but for any fixed strictly increasing time reparameterization they are uniquely defined by the equations above. Thus, the CT-STL semantics are preserved exactly under the time reparameterization.

\vspace{-0.2cm}
To realize the temporal operators exactly on the numerical trajectory representation, we reuse the intermediate integration subnodes generated during the numerical evaluation of the flow map on each shooting interval $[\tau_k,\tau_{k+1}]$, $k\in\{1,\dots,K-1\}$. Let
\vspace{-0.2cm}
\[
\tau_k=\tau_k^0<\tau_k^1<\cdots<\tau_k^N=\tau_{k+1}
\]
denote the integration subnodes associated with $[\tau_k,\tau_{k+1}]$. The corresponding dense-time states are obtained by applying a chosen numerical integration scheme over these subintervals, for example, a Runge--Kutta method. Consequently, each state $\tilde{x}(\tau_k^i)$ is an explicitly generated point on the numerical trajectory induced by the nodal variables and the chosen input parameterization. Flattening all such subnodes over the horizon yields the dense-time grid
\vspace{-0.2cm}
\[
\{\bar\tau_m\}_{m=1}^{M},
\qquad
M=(K-1)N+1,
\]
and we denote the associated dense-time states by
\vspace{-0.2cm}
\[
\bar x_m:=\tilde x(\bar\tau_m).
\]
Once these dense-time states are available from the dynamics discretization, CT-STL evaluation requires only predicate evaluations and GMSR compositions, with no additional state propagation.

\vspace{-0.2cm}
The following schematic illustrates how the shooting-node states $\tilde x_k$, the intermediate subnodes $\tau_k^i$, and the flattened dense-time sequence $\bar x_m=\tilde x(\bar\tau_m)$ are related.
\vspace{-0.4cm}
\begin{center}
\begin{tikzpicture}[x=1.0cm,y=0.7cm,>=latex]

\definecolor{mainblue}{RGB}{40,90,160}
\definecolor{suborange}{RGB}{220,130,40}
\definecolor{densegreen}{RGB}{20,120,80}

\draw[thick] (0,0) -- (8,0);

\foreach \x in {0,1.0,2.0,2.7,3.8,4.8,5.8,6.5,8.0}
    \fill (\x,0) circle (1.2pt);

\node[above=3pt, text=mainblue] at (0,0) {$\tilde{x}_1$};
\node[above=3pt, text=mainblue] at (3.8,0) {$\tilde{x}_2$};
\node[above=3pt, text=mainblue] at (8.0,0) {$\tilde{x}_K$};

\node[above=3pt, text=suborange] at (1.0,0) {$\tau_1^1$};
\node[above=3pt, text=suborange] at (2.0,0) {$\tau_1^2$};
\node[above=3pt] at (2.7,0) {$\cdots$};
\node[above=3pt, text=suborange] at (4.8,0) {$\tau_2^1$};
\node[above=3pt, text=suborange] at (5.8,0) {$\tau_2^2$};
\node[above=3pt] at (6.5,0) {$\cdots$};

\node[below=3pt, text=densegreen] at (0,0) {$\bar{x}_1$};
\node[below=3pt, text=densegreen] at (1.0,0) {$\bar{x}_2$};
\node[below=3pt, text=densegreen] at (2.0,0) {$\bar{x}_3$};
\node[below=3pt] at (2.7,0) {$\cdots$};
\node[below=3pt, text=densegreen] at (3.8,0) {$\bar{x}_{N+1}$};
\node[below=3pt, text=densegreen] at (4.8,0) {$\bar{x}_{N+2}$};
\node[below=3pt, text=densegreen] at (5.8,0) {$\bar{x}_{N+3}$};
\node[below=3pt] at (6.5,0) {$\cdots$};
\node[below=3pt, text=densegreen] at (8.0,0) {$\bar{x}_M$};

\end{tikzpicture}
\end{center}
\vspace{-0.2cm}

Let
\[
\tilde X:=\{\tilde x_k\}_{k=1}^{K},
\qquad
\tilde U:=\{\tilde u_k\}_{k=1}^{K},
\]
and write
\[
\tilde{\Gamma}_{\bm c}^{\tilde\varphi}(\tilde X,\tilde U;\bar\tau_m)
\]
for the robustness value of a transformed subformula $\tilde\varphi$ at dense-time node $\bar\tau_m$, where the dependence on $(\tilde X,\tilde U)$ enters through the dense numerical trajectory. For an atomic predicate $\tilde\mu:=(\tilde g(\tilde x)\ge 0)$,
\vspace{-0.2cm}
\[
\tilde{\Gamma}_{\bm c}^{\tilde\mu}(\tilde X,\tilde U;\bar\tau_m):=\tilde g(\bar x_m).
\]
For composite formulas, the values $\tilde{\Gamma}_{\bm c}^{\tilde\varphi}(\tilde X,\tilde U;\bar\tau_m)$ are obtained recursively from the logical constructions of the previous subsection.

\vspace{-0.2cm}
Now consider a transformed STL formula evaluated at dense-time node $\bar\tau_\ell$. Suppose the original physical-time specification uses the interval $[a,b]$. Let $[\tau_a,\tau_b]$ denote the corresponding dilated relative interval at $\bar\tau_\ell$, as defined above. We then define
\vspace{-0.2cm}
\[
\mathcal I_\ell[a,b]
:=
\left\{
m\in\{1,\dots,M\}
:\;
\bar\tau_m\in[\bar\tau_\ell+\tau_a,\ \bar\tau_\ell+\tau_b]
\right\},
\]
assuming that this interval lies within the time horizon. If the boundary points are not already included in the dense-time grid, they can be inserted by refining the corresponding integration subintervals.

\vspace{-0.2cm}
Using this notation, the temporal robustness measures are defined by
\vspace{-0.2cm}
\begin{align*}
    \tilde{\Gamma}_{\bm c}^{\bm F_{[\tau_a,\tau_b]}\tilde\varphi}(\tilde X,\tilde U;\bar\tau_\ell)
    &:=
    {}^{\vee} h^{c_1}
    \Big(
        [\,\tilde{\Gamma}_{\bm c}^{\tilde\varphi}(\tilde X,\tilde U;\bar\tau_m)\,]_{m\in\mathcal I_\ell[a,b]}
    \Big), \\
    \tilde{\Gamma}_{\bm c}^{\bm G_{[\tau_a,\tau_b]}\tilde\varphi}(\tilde X,\tilde U;\bar\tau_\ell)
    &:=
    {}^{\wedge} h^{c_1}
    \Big(
        [\,\tilde{\Gamma}_{\bm c}^{\tilde\varphi}(\tilde X,\tilde U;\bar\tau_m)\,]_{m\in\mathcal I_\ell[a,b]}
    \Big).
\end{align*}
Accordingly,
\vspace{-0.2cm}
\begin{align*}
    (\tilde x,\bar\tau_\ell)\models \bm F_{[\tau_a,\tau_b]}\tilde\varphi
    &\iff
    \tilde{\Gamma}_{\bm c}^{\bm F_{[\tau_a,\tau_b]}\tilde\varphi}(\tilde X,\tilde U;\bar\tau_\ell)\ge 0, \\
    (\tilde x,\bar\tau_\ell)\models \bm G_{[\tau_a,\tau_b]}\tilde\varphi
    &\iff
    \tilde{\Gamma}_{\bm c}^{\bm G_{[\tau_a,\tau_b]}\tilde\varphi}(\tilde X,\tilde U;\bar\tau_\ell)\ge 0.
\end{align*}

Similarly, the \until{} operator is parameterized as
\vspace{-0.2cm}
\begin{align*}
    \tilde{\Gamma}_{\bm c}^{\tilde\varphi_1 \bm U_{[\tau_a,\tau_b]} \tilde\varphi_2}(\tilde X,\tilde U;\bar\tau_\ell)
    &:=
    {}^{\vee} h^{c_1}
    \Big(
        [\,z_m\,]_{m\in\mathcal I_\ell[a,b]}
    \Big),
\end{align*}
where
\vspace{-0.2cm}
\begin{align*}
    z_m
    &:=
    {}^{\wedge} h^{c_2}
    \Big(
        \tilde{\Gamma}_{\bm c}^{\tilde\varphi_2}(\tilde X,\tilde U;\bar\tau_m),\;
        {}^{\wedge} h^{c_3}
        \big(
            [\,\tilde{\Gamma}_{\bm c}^{\tilde\varphi_1}(\tilde X,\tilde U;\bar\tau_q)\,]_{q=\ell}^{m}
        \big)
    \Big).
\end{align*}
Hence,
\vspace{-0.2cm}
\begin{align*}
    (\tilde x,\bar\tau_\ell)\models \tilde\varphi_1 \bm U_{[\tau_a,\tau_b]} \tilde\varphi_2
    &\iff
    \tilde{\Gamma}_{\bm c}^{\tilde\varphi_1 \bm U_{[\tau_a,\tau_b]} \tilde\varphi_2}(\tilde X,\tilde U;\bar\tau_\ell)\ge 0.
\end{align*}
Thus, $\bm G$ requires satisfaction at all dense-time samples in the interval, $\bm F$ requires satisfaction at at least one such sample, and $\bm U$ requires the existence of a sample at which $\tilde\varphi_2$ holds while $\tilde\varphi_1$ holds at all preceding samples.

\vspace{-0.2cm}
For a fixed dense grid and fixed active index sets, the resulting parameterization is smooth with respect to the optimization variables, since the dependence on the decision variables enters through the integrated dense-time states and the smooth GMSR operators. Moreover, because the temporal operators are evaluated on the same dense-time grid used to discretize the dynamics, the CT-STL semantics are enforced exactly on the dense numerical trajectory representation, up to the numerical accuracy of the underlying integration scheme.

\end{document}